\newtheorem{thm}{Theorem}[section]
\newtheorem{deff}[thm]{Definition}
\newtheorem{prop}[thm]{Proposition}
\newtheorem{theorem}{Theorem}[section]
\newtheorem{example}[theorem]{Example}
\newenvironment{defn}{\begin{deff}
		\rm }{\end{deff}}
\newcommand{\mc}{\mathcal}
\renewcommand{\ss}{\subseteq}
\newcommand{\ra}{\rightarrow}
\newcommand{\msc}{\mathscr}
\newcommand{\ol}{\overline}
\newcommand{\G}{\mathscr G}
\def \H{\mathcal H}
\def \l2x{L^2(X;\mc H)}
\def \dx{{d{\mu_{X}}(x)}}
\DeclareMathOperator*{\range}{range}
\DeclareMathOperator*{\ess-sup}{ess-sup}
\DeclareMathOperator*{\Span}{span}
\theoremstyle{definition}
\theoremstyle{remark}
\numberwithin{equation}{section}
\newcommand\reallywidehat[1]{%
	\savestack{\tmpbox}{\stretchto{%
			\scaleto{%
				\scalerel*[\widthof{\ensuremath{#1}}]{\kern.1pt\mathchar"0362\kern.1pt}%
				{\rule{0ex}{\textheight}}%WIDTH-LIMITED CIRCUMFLEX
			}{\textheight}% 
		}{2.4ex}}%
	\stackon[-6.9pt]{#1}{\tmpbox}%
}
\begin{document}
\sloppy	
\title{ An Application of the Supremum cosine angle between\\ multiplication invariant  spaces in $L^2(X; \mc H)$}
%\title{Injectivity of Sampling operator using supremum cosine angle for MI spaces}
%%%%%%%%%%%%%%%%%%%%%%%%%%%%%%%%%%%%%%%%%%%%%%%%%%%%%
%    Information for first author
\author{Sudipta Sarkar}

\author{Sahil Kalra}
\address{Department of Mathematics,
	Indian Institute of Technology Indore,
	Simrol, Khandwa Road,
	Indore-453 552}
\email{phd1701141004@iiti.ac.in, phd2001141005@iiti.ac.in, nirajshukla@iiti.ac.in}

\thanks{Research of S. Sarkar and Sahil Kalra was supported by research grant from CSIR, New Delhi [09/1022(0037)/2017-EMR-I] and University Grant Commision- Ref. No.: 191620003953, respectively.}

\author{Niraj K. Shukla}
%    \thanks will become a 1st page footnote.
%\thanks{The first author was supported in part by NSF Grant \#000000.}

%    Information for second author

%%%%%%%%%%%%%%%%%%%%%%%%%%%%%%%%%%%%%%%%%%%%%%%%%%%%%%%
%    General info
\subjclass[2000]{42C40, 47A15, 43A32, 43A65, 94A20}

\keywords{Frames, Supremum cosine angle, Translation invariant spaces, Zak transform, Sampling}
%%%%%%%%%%%%%%%%%%%%%%%%%%%%%%%%%%%%%%%%%%%%%%%%%%%%
\begin{abstract}
	
	In this article, we describe  the supremum cosine angle between two multiplication invariant (MI) spaces and its connection with the closedness of the sum of those spaces. The results obtained for MI spaces are preserved by the corresponding fiber spaces almost everywhere. Employing the Zak transform, we obtain the results for translation invariant spaces on locally compact groups by action of its closed abelian subgroup. Additionally, we provide the application of our results to sampling theory.
\end{abstract}

%%%%%%%%%%%%%%%%%%%%%%%%%%%%%%%%%%%%%%%%%%%%%%%%%%

%%%%%%%%%%%%%%%%%%%%%%%%%%%%%%%%%%%%%%%%%%%%%%%%%%
% Contents and Chapter Making
\maketitle
 
%
%\tableofcontents
%
%\clearpage

%%%%%%%%%%%%%%%%%%%%%%%%%%%%%%%%%%%%%%%%%%%%%%%%%

\section{{\bf Introduction }}

Let $E$ and $F$ be two closed subspaces of a separable Hilbert space $\mc H$, then the  \textit{supremum cosine angle} between $E$ and $F$ is defined by
\begin{equation}\label{eq:sup}
	\mathfrak{S}(E,F):=\mbox{sup}\left\{\frac{\|P_{F}u\|}{\|u\|}:{u\in E \backslash \{0\}}  \right\}=\|P_{F}|_{E}\|,
\end{equation}
where $P_{F}$ is  the orthogonal projection of $\mc H$ onto $F$ and $P_{F}|_{E}$ is its restriction on $E$. It is clear from the definition that
$\mathfrak S(E,F)=0$ if either $E = \{0\}$ or $F = \{0\}.$ In general, $\mathfrak S(E,F)=\mathfrak S(F,E)$.  The supremum cosine angle has applications in sampling theory and can be derived from the spectral coherence function used for sampling of non-ideal acquisition devices \cite{unser1994general}.  The
supremum cosine angle is connected to the closedness of the sum of two closed subspaces of a Hilbert space \cite{aldroubi1998construction,bownik2004biorthogonal,kim2003quasi,kim2005infimum,kim2008internal,Tang2000obliqueprojection}.
In \cite{Kim2002onrieszwavelet},  Kim et al. find out the conditions under which the sum of two singly generated shift-invariant subspaces of $L^2(\mathbb R^d)$ is closed. Later Kim et al. \cite{kim2006supremum}    generalized the result for shift invariant subspaces of $L^2(\mathbb R^d)$ with multi generators.
The shift invariant space has been widely used in sampling, wavelets, harmonic analysis, approximation theory,  and signal processing. 

 Our goal is to find the supremum cosine angle between multiplication invariant (MI) spaces using range functions and study the conditions under which the two MI spaces will be closed. The MI spaces were first introduced by M. Bownik and K. Ross \cite{bownik2015structure}. The study of such spaces is associated with the study of the Bessel system generated by multiplications in $\mc H$-valued Bochner space $L^2(X;\mathcal{H}).$ The idea is to generalize the modulation invariant subspaces of LCA groups and the same is done by introducing the concept of Parseval determining set in $L^1(X),$ which is the generalization of characters on an LCA group. The main advantage is that the results developed for the supremum cosine angle between  MI spaces will be utilized to find the supremum cosine angle between translation invariant subspaces of locally compact groups using Zak transform which converts the translation into multiplications \cite{iverson2015subspaces}. This setup covers all the classical ones like shift-invariant systems of $L^2(\mathbb R^d)$. The main reason to choose the MI spaces is that not only we can develop similar results for shift invariant spaces but also for translation invariant spaces where the translations are taken over a  non-discrete subgroup of LCA group using the Zak transformation. In this regard, it generalizes the classical results related to the closedness of two $\Gamma$-translation invariant subspaces in the context of a locally compact group $\mathscr G$ (need not be abelian), having $\Gamma$ as a closed abelian subgroup. One of the benefits of Zak transform for the pair $(\G, \Gamma)$ is that the various inaccessible examples pairs like        $(\mathbb R^n,  \mathbb R^m)$, $(\mathbb R^n, \mathbb Z^m)$,  $(\mathbb Q_p, \mathbb Z_p)$, $(\mathcal G, \Gamma)$, etc., can be accessed through it where  $n\geq m$,  $\Gamma$ (not necessarily co-compact, i.e., $\mathcal G/\Gamma$-compact, or uniform lattice) is a  closed subgroup of the second countable locally compact abelian (LCA) group $\mathcal G$, and   $\mathbb Z_p$ is the  $p$-adic integer in the $p$-adic number $\mathbb Q_p$. In case of $\Gamma$  co-compact, we have a similar characterization using fiberization. First, we characterize the supremum cosine angle using Gramian and dual Gramian operator (Theorem \ref{T:sup-pointwise}).  The Gramian and the dual Gramian analysis is more appropriate for the analysis of a Riesz basis and frame respectively. The Gramian analysis is more helpful for the case of finitely many generators as then the MI spaces can be characterized nicely using a finite matrix instead of an infinite matrix for the case of infinitely many generators. 
 
Next, we connect our results pointwise using range functions (Theorem \ref{T:sup-pointwise} and \ref{T:closedness}). The range functions are generally employed for the analysis of the local behavior of the MI spaces started by Helson \cite{helsonlectures}. For the setting of MI spaces, the global and local behavior of a frame agrees well with each other. Bownik et.al. in paper \cite{bownik2015structure} proved 
that the system generated by multiplications is a frame (Bessel) if and only if the corresponding local system is a frame (Bessel) almost everywhere. The MI operators has a one-to-one correspondence with range operators acting locally over fibers indexed by $x \in X$. Furthermore, many properties such as unitary equivalence of the MI systems, dual frame, and orthogonality for a pair of frames are also preserved. The reader can refer to \cite{bownik2019multiplication,iverson2015subspaces} for more details on range functions.

The organization of the paper is as follows: In Section \ref{S:Preliminaries}, we briefly discuss the preliminaries used in this paper. In Section \ref{S:Results}, we measure the supremum cosine angle between two MI spaces. Further, we obtain a   necessary and sufficient condition for the sum of two MI spaces to become closed. In Section \ref{S:Sampling} we establish a  condition so that the sampling operator becomes one-to-one. The paper ends with an application to the translation invariant spaces on locally compact groups by the action of its closed abelian subgroup in Section \ref{LC Group}. 

\section{Multiplication invariant spaces and range functions}\label{S:Preliminaries}
Throughout the paper, we fix some notations that are used throughout this paper. We denote $(X, \mu_X)$ to be a  $\sigma$-finite and complete measure space such that $L^2 (X)$ is separable and $\mc H$ is a separable Hilbert space. We briefly discuss the  supremum cosine angle for multiplication invariant spaces in $L^2(X; \mc H)$, where 	$$
L^2(X;\mathcal H)=\left\{\varphi \, |\  \varphi : X \rightarrow \mathcal H ~ \text{is measurable such that}\   \int_{X}\|\varphi(x)\|^2\  \dx<\infty \right\}.
$$ Now we define the multiplication operator  on $L^2(X;\mathcal H)$.
\begin{defn}
	For $\phi\in L^\infty(X)$, the operator $M_\phi$ on  $L^2(X;\mathcal H)$ is defined  by
	$$
	(M_\phi f)(x)=\phi(x) f(x), \ a.e. \ x \in X, \ f \in  L^2(X;\mathcal H),$$ is known as  \textit{multiplication operator}.
\end{defn}
The operator  $M_\phi$ is a bounded linear operator on $L^2 (X; \mc H)$ satisfying $\|M_\phi\|=\|\phi\|_{L^\infty}$ provided $X$  is a $\sigma$-finite measure space.  If $X$  is not a $\sigma$-finite measure space, then $\|M_\phi\|$ need not be same as $\|\phi\|_{L^\infty}$ \cite[Theorem 1.5]{conway1997course}. We define   Parseval determining set for $L^1 (X)$, introduced by Iversion \cite{iverson2015subspaces} which is a  measure-theoretic abstraction of characters for a locally compact abelian group.
\begin{defn}\label{PDS}
	Let  $(\mc M, \mu_{\mc M})$ be a $\sigma$-finite measure space. A set $\mc D=\{g_s\in L^{\infty}(X): s \in \mc M\}$ is said to be  \textit{Parseval determining set} for $L^1(X)$ if   for each $f \in L^1(X)$,   $s\mapsto \int_{X}f(x)\overline{g_s (x)}\ d{\mu_{X}}(x)$ is measurable on $\mc M$ and  
	$$\int_{\mc M}\bigg|\int_{X}f(x)\overline{g_s(x)} \ d{\mu_{X}}(x)\bigg| ^2d{\mu_{\mc M}}(s)=\int_{X} | f(x) |^2 \  d{\mu_{X}}(x).$$  
	
\end{defn}

Further, we define multiplication invariant space on $L^2(X;\mc H)$ associated with a Parseval determining set.
\begin{defn} \label{MIOpDefn}
	Let $V$ be a closed subspace of $L^2(X;\mathcal H)$ and $\mc D$ be a Parseval determining set for $L^1(X)$.  We say $V$  is a  \textit{multiplication invariant (MI) space} corresponding to $\mc D$ if  $$M_\phi f \in V, \  \mbox{for all} \ \phi \in \mc D \ \mbox{and} \ f \in V.$$
\end{defn}
\begin{defn}
	Given a family $\mc A \subset  \l2x$ and a Parseval determining set  $\mc D \subset L^\infty (X)$ for $L^1(X)$,  we define  \textit{multiplication generated (MG)}  system  
	$\mc E_{\mc D}(\mc A)$ and its associated MI  space
	$\mc S_{\mc D}(\mc A)$ as follows:
	\begin{equation}\label{eq:ED}
		\mc E_{\mc D}(\mc A) :=\{M_{\phi}\varphi: \phi\in \mc D,\varphi \in \mc A\} \ \mbox{and } \mc S_{\mc D}(\mc A) := \ol{\Span} \  \mc E_{\mc D}(\mc A),
	\end{equation}
	respectively. 
	The system $\mc E_\mc D(\mc A)$ is
	\begin{itemize}
		\item[(i)] \textit{complete}, if $\ol{\Span} \,\mc E_\mc D(\mc A)= L^2(X;\mc H);$
		\item [(ii)] a \textit{Bessel MG-system}, if there existes $M>0$ such that 
		$$\sum_{\phi \in \mc A}\int_X
		|<f(x), M_\phi \varphi(x)>|^2d\mu(x) \leq M\|f\|^2\,\,\,\,\,\,\,\text{for all $f\in \mc H;$}$$
		\item[(iii)] a \textit{frame} for $\mc S_\mc D(\mc A)$, if there exists constants $m, M>0$
		$$m\|f\|^2\leq\sum_{\phi \in \mc A}\int_X
		|<f(x), M_\phi \varphi(x)>|^2d\mu(x) \leq M\|f\|^2\,\,\,\,\,\,\,\text{for all $f\in \mc S_\mc D(\mc A)$}.$$
	\end{itemize}
\end{defn}
			For the characterization of MI   spaces, the range function plays a crucial role. The history of the range function started from the work of Helson \cite{helsonlectures} and then widely been used for the characterization of translation invariant spaces and multiplication invariant spaces   \cite{bownik2015structure,iverson2015subspaces,bownik2019multiplication}.
			A \textit{range function}  on $X$ is a mapping 
			$J:X\rightarrow \{\text{closed subspaces of} ~\mathcal H\}$.
			Further, we say $J$  is  \textit{measurable}  if   for any $u,v\in \mathcal H$ the mapping 
			$x\mapsto \langle P_J(x)u,v\rangle$ is measurable on $X$, where for $x\in X$, the  orthogonal projection $P_{J} (x): \mathcal H \rightarrow \mathcal H$  projects   onto $J(x)$.   
			We can associate a closed subspace $V_J$ with every  projection-valued  map $J$ as follows:
			\begin{align}\label{V_j}
				V_J:=\left\{\varphi \in L^2(X;\H):\varphi(x)\in J(x)~\mbox{for}~ a.e.~ x\in X\right\}.
			\end{align}
			It is clear that $V_J$ is an MI space. The following characterization of MI spaces in terms of range functions suggests that every MI space has the form given in (\ref{V_j}).
			\begin{thm}\cite{bownik2019multiplication}
				The following statements are equivalent for a closed subspace $V \subset L^2(X;\mc H)$ and Parseval determining set $\mc D:$
				\begin{itemize}
					\item [$(i)$]The space V is MI.
					\item [$(ii)$] There exists a measurable range function $J$
					such that $V_J=V.$
					\item [$(iii)$] $M_\phi V \subset V$ for every $\phi \in \mc D.$
				\end{itemize}
				Furthermore, if $V$ is an MI space generated by an countable collection $\mc A=\{\psi_i\}_{i\in \mathbb{N}},$ i.e., $V=\ol{\Span}\{M_\phi\psi_i : \phi \in \mc D, i \in \mathbb{N}\} ,$ then the corresponding range function $J:=J_\mc A$ is given by
				$$J_\mc A(x) = \ol{\Span}\{\psi_i(x) : i \in \mathbb{N}\}\,\,\,\,\text{ for a.e. $x\in X$}.$$
			\end{thm}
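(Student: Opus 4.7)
The plan is to dispatch the tautological equivalences first, then construct a range function for the substantive direction, and finally specialize to the countably generated case.

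First, $(i) \Leftrightarrow (iii)$ is a direct restatement of Definition 2.3, which defines multiplication invariance precisely as $M_\phi f \in V$ for every $\phi \in \mc D$ and $f \in V$. The implication $(ii) \Rightarrow (i)$ is equally direct: if $V = V_J$ and $f \in V_J$, then $(M_\phi f)(x) = \phi(x) f(x)$ lies in $J(x)$ because $J(x)$ is a linear subspace, so $M_\phi f \in V_J = V$.

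For $(i) \Rightarrow (ii)$, I would fix a countable dense sequence $\{f_n\}_{n\in\mathbb{N}} \subset V$ (available since $L^2(X;\mc H)$, and hence $V$, is separable) and define the candidate range function by
\[
J(x) := \ol{\Span}\{f_n(x) : n \in \mathbb{N}\}, \qquad x \in X.
\]
Measurability of $J$ reduces to showing that $x \mapsto \langle P_{J(x)} u, v \rangle$ is measurable for every $u, v \in \mc H$, which follows from a pointwise Gram--Schmidt orthonormalization applied to the measurable vector fields $\{f_n\}$. The inclusion $V \subset V_J$ is then obtained by approximating $f \in V$ in $L^2(X;\mc H)$-norm by finite linear combinations of $\{f_n\}$, passing to an a.e.\ convergent subsequence, and invoking closedness of each $J(x)$.

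The reverse inclusion $V_J \subset V$ is the main obstacle, and is where the Parseval determining property of $\mc D$ does the real work. Suppose $h \in V_J$ with $h \perp V$. Since $V$ is MI, $M_\phi f_n \in V$ for every $\phi \in \mc D$ and $n \in \mathbb{N}$, so
\[
0 = \langle h, M_\phi f_n \rangle = \int_X \ol{\phi(x)} \, \langle h(x), f_n(x) \rangle \, \dx.
\]
Setting $F_n(x) := \langle h(x), f_n(x) \rangle$, Cauchy--Schwarz places $F_n \in L^1(X)$. Applying Definition 2.2 to $F_n$, the pairing $\int_X F_n \ol{g_s} \, \dx$ vanishes for every $s \in \mc M$, so the Parseval identity forces $\int_X |F_n|^2 \, \dx = 0$, hence $F_n = 0$ a.e. Removing a countable union of null sets gives $h(x) \perp f_n(x)$ for every $n$, a.e.\ $x$; equivalently, $h(x) \perp J(x)$ a.e. Combined with $h(x) \in J(x)$, this forces $h = 0$, completing $V_J \subset V$.

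For the ``furthermore'' clause, I would set $K(x) := \ol{\Span}\{\psi_i(x) : i \in \mathbb{N}\}$ and verify measurability as above. The inclusion $V \subset V_K$ is immediate because each $\psi_i \in V_K$ and $V_K$ is MI (by the already-established $(ii) \Rightarrow (i)$), so $M_\phi \psi_i \in V_K$ for every $\phi \in \mc D$, $i \in \mathbb{N}$. The reverse inclusion $V_K \subset V$ is obtained by rerunning the orthogonality argument verbatim with $\{\psi_i\}$ in place of $\{f_n\}$. Uniqueness of the range function attached to an MI space (established while comparing $J$ with $K$ via pointwise orthogonal projections) then identifies $J_{\mc A}$ with $K$ modulo null sets. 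The principal technical delicacies throughout are confirming measurability of the pointwise span and ensuring that the Parseval identity applies cleanly to integrands that sit in $L^1(X)$ rather than $L^2(X)$ a priori; the latter is handled by reading Definition 2.2 in the contrapositive, which forces the $L^2$-norm of the integrand to vanish whenever every pairing against $\mc D$ does.
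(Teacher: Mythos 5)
The paper does not prove this theorem; it is quoted verbatim from \cite{bownik2019multiplication} as background. Your argument is essentially the standard Helson-style range-function proof given in that reference: the only substantive direction is $(i)\Rightarrow(ii)$, and your treatment of it --- dense sequence, pointwise closed span, measurability via measurable Gram--Schmidt, and the orthogonality argument in which the Parseval determining property of $\mc D$ is applied to the $L^1$ functions $x\mapsto\langle h(x),f_n(x)\rangle$ to force them to vanish --- is correct, including your observation that the Parseval identity must be read for $L^1$ integrands. I see no gap; note only that under the paper's Definition of MI the equivalence $(i)\Leftrightarrow(iii)$ is indeed tautological (the substantive version, invariance under all of $L^\infty(X)$, follows anyway from your $(ii)$, since $V_J$ is invariant under every bounded multiplication).
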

			For a countable family of functions $\mc A=\{\psi_i: i\in I\} \subset L^2(X;\mc H),$ let the multiplication generated system $ \mc E_{\mc D}(\mc A)$ be Bessel. Then for a.e. $x\in X$, the system     $\{\psi_i(x): i\in I\}$ is Bessel \cite[Theorem 5.5]{bownik2019multiplication}. The \textit{analysis operator} $T_{\mc A}(x):  J_{\mc A} (x)\rightarrow\ell^2(I)  $ of $\{\psi_i(x): i\in I\}$ is given by 
			$$T_{\mc A}(x)(h)= \{\langle h, \psi_i(x) 	\rangle\}_{i\in I}\,\,\,\, \text{for every $h \in J_{\mc A}(x)$}.$$
			The \textit{synthesis operator} of  $\{\psi_i(x): i\in I\}$ is the adjoint of analysis operator $T_{\mc A}(x)$ and is given by 
			$$T_{\mc A}^*(x) : \ell^2(I) \ra J_{\mc A} (x),\,\ T_{\mc A}^*(x) (c)=\sum_{i \in I} c_i \psi_i (x)$$
			for $c=\{c_i\}_{i \in I}$  having finitely many   non-zero terms.
			The operator $G_{\mc A}(x): =T_{\mc A}(x)T_{\mc A}^*(x)$  on $\ell^2(I)$ is the  \textit{Gramian}  of $\{\psi_i(x): i\in I\}.$ Let $\mc A'=\{\varphi_i\}_{i \in I} \subset L^2(X;\mc H)$  be another countable collection such that $ \mc E_{\mc D}(\mc A')$ is also Bessel, then the operator $G_{\mc A,\mc A'}(x): = T_{\mc A}(x)T_{\mc A'}^*(x)$  is the \textit{mixed Gramian} of $\{\psi_i(x)\}_{ i\in I}$ and $\mc \{\varphi_i(x)\}_{i \in I}$ \cite{christensen2016introduction}.  The matrix representations of Gramian and mixed Gramian operator corresponding to the collections $\mc A=\{\psi_i: i\in I\}$ and  $\mc A'=\{\varphi_i: i\in I\}$, is 
			\begin{align*}
				G_{\mc A}(x)=\left[\langle \psi_j(x),\psi_i(x)\rangle\right]_{i, j \in I} \  \mbox{and} \ G_{\mc A,\mc A'}(x)=\left[\langle \psi_j(x),\varphi_i(x)\rangle\right]_{i, j \in I}. 
			\end{align*}

\section{Supremum cosine angles for MI spaces}\label{S:Results}
In this section, we will measure the supremum cosine angle between two MI spaces in terms of their Gramian. Also, we find a necessary and sufficient condition for the closedness of the sum of two multiplication invariant spaces employing the supremum cosine angle. The characterization of the closedness of the sum of two subspaces of a separable Hilbert space in terms of the supremum cosine angle was first studied by Tang et. al. \cite{Tang2000obliqueprojection}. Due to the applications in wavelets and multi resolution analysis, the problem was shifted to singly generated shift invariant spaces by Kim at. al. \cite{kim2006supremum}. The similar characterization for MI spaces and the case of shift invariant spaces or more generally translation invariant spaces can easily be deduced out of it using Zak transform (\ref{zak trans}). 

The following result Theorem $(\ref{T:sup-pointwise}),$ which is our first main result, describes the supremum cosine angle between two multiplication invariant spaces in terms of Gramians of corresponding generating sets which an abstract version of  \cite[Lemma 3.6]{kim2006supremum}.
\begin{thm}\label{T:sup-pointwise} 
	Let  $\mc A=\{\varphi_i: i\in I\}$ and $\mc A'=\{\psi_i: i\in I\}$ be two countable collections of functions in $L^2(X;\mc H).$ Define $\sigma(S_\mc D(\mc A)):=\{x \in X : J_\mc A(x) \neq 0\}$ and $\Omega:= \sigma(\mc S_{\mc D}(\mc A)) \cap \sigma(\mc S_{\mc D}(\mc A')) $. Then the supremum cosine angle between $\mc  S_{\mc D}(\mc A)$ and $\mc  S_{\mc D}(\mc A')$ is given by
	$$\mathfrak S(\mc  S_{\mc D}(\mc A), \mc S_{\mc D}(\mc A'))=\ess-sup_{x\in \Omega}\big\{\mathfrak S\big(J_{\mc A}(x), J_{\mc A'}(x)\big)\big\}.$$
	In addition, if the multiplication generated systems $\mc E_{\mc D}(\mc A)$ and $\mc E_{\mc D}(\mc A')$ are  frames for $\mc S_{\mc D}(\mc A)$ and $\mc S_{\mc D}(\mc A')$, respectively, then		$$\mathfrak S(\mc  S_{\mc D}(\mc A), \mc S_{\mc D}(\mc A'))=\ess-sup_{x\in \Omega}\left\|(G_{\mc A'}(x)^\dagger )^\frac{1}{2} G_{\mc A, \mc A'}(x) (G_{\mc A}(x)^\dagger)^\frac{1}{2}\right\|,$$  where $\dagger$ denotes the pseudo inverse.
\end{thm}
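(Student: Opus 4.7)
The plan is to reduce both identities to pointwise (fiberwise) statements by exploiting the one-to-one correspondence between MI operators on $L^2(X;\mc H)$ and measurable range operators, as recalled in Section~\ref{S:Preliminaries}. I view the supremum cosine angle as the norm of an MI operator, namely the projection onto one MI space restricted to another, so by the MI norm identity it will equal the essential supremum of its fiberwise analogue; under the frame hypothesis the fiberwise angle will then admit an explicit Gramian formula.

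For the first identity, I would observe that $P:=P_{\mc S_\mc D(\mc A')}$ commutes with every $M_\phi$ (since $\mc S_\mc D(\mc A')$ is MI), hence is an MI operator acting fiberwise by $(Pf)(x)=P_{J_{\mc A'}(x)}f(x)$. Restricting $P$ to $V_{J_\mc A}$ gives an MI operator from $V_{J_\mc A}$ to $V_{J_{\mc A'}}$ whose associated range operator is $R(x):=P_{J_{\mc A'}(x)}|_{J_\mc A(x)}$. Applying the MI norm identity then yields
\[
\mathfrak S(\mc S_\mc D(\mc A),\mc S_\mc D(\mc A'))=\|P|_{\mc S_\mc D(\mc A)}\|=\ess-sup_{x\in X}\|R(x)\|=\ess-sup_{x\in X}\mathfrak S(J_\mc A(x),J_{\mc A'}(x)),
\]
and since the fiberwise angle vanishes on $X\setminus\Omega$, the essential supremum will be taken over $\Omega$, as claimed. (The $\leq$ direction can alternatively be obtained directly from $\|Pf\|^2=\int\|P_{J_{\mc A'}(x)}f(x)\|^2\,d\mu_X(x)$; the matching lower bound is where the MI--range correspondence will be essential, because it circumvents an otherwise delicate measurable-selection argument.)

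For the second identity, I would use that under the frame hypothesis the fiberization theorem in \cite{bownik2019multiplication} implies that $\{\varphi_i(x)\}$ and $\{\psi_i(x)\}$ are frames for $J_\mc A(x)$ and $J_{\mc A'}(x)$ almost everywhere, so the reconstruction formula $P_{J_{\mc A'}(x)}=T_{\mc A'}(x)^*G_{\mc A'}(x)^\dagger T_{\mc A'}(x)$ holds and gives $\|P_{J_{\mc A'}(x)}h\|^2=\|(G_{\mc A'}(x)^\dagger)^{1/2}T_{\mc A'}(x)h\|^2$ for any $h$. Writing $h\in J_\mc A(x)$ as $h=T_\mc A(x)^*c$ converts $T_{\mc A'}(x)h$ into the mixed Gramian applied to $c$, and gives $\|h\|^2=\langle G_\mc A(x)c,c\rangle$. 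After the substitution $c=(G_\mc A(x)^\dagger)^{1/2}d$, which normalizes $\|h\|^2$ to $\|d\|^2$ on the range of $G_\mc A(x)$, taking the supremum of the resulting Rayleigh quotient over $d$ and using $\|A\|=\|A^*\|$ produces
\[
\mathfrak S(J_\mc A(x),J_{\mc A'}(x))=\big\|(G_{\mc A'}(x)^\dagger)^{1/2}\,G_{\mc A,\mc A'}(x)\,(G_\mc A(x)^\dagger)^{1/2}\big\|,
\]
which combined with the first identity will finish the proof.

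The main obstacle will be the measurability bookkeeping: I must verify that $x\mapsto\|R(x)\|$, the Gramians, their Moore--Penrose pseudoinverses, and the square roots of the latter are all measurable so that the essential suprema are well defined and the fiberwise identity lifts to the global one. This should be handled by Borel functional calculus applied to measurable positive operator-valued functions together with the standard MI--range norm identity of \cite{bownik2019multiplication}. A secondary subtlety is the careful use of pseudoinverses: identities such as $G_\mc A(x)G_\mc A(x)^\dagger(T_\mc A(x)h)=T_\mc A(x)h$ rely on $T_\mc A(x)h$ lying in $\range(G_\mc A(x))=\range(T_\mc A(x))$, which is automatic for the vectors appearing in the algebra.
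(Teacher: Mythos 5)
Your proposal is correct, and for the crucial step of the first identity it takes a genuinely different route from the paper. The upper bound is obtained identically in both arguments: one integrates the pointwise estimate $\|P_{J_{\mc A'}(x)}u(x)\|\leq \mathfrak S(J_{\mc A}(x),J_{\mc A'}(x))\,\|u(x)\|$ using $P_{\mc S_{\mc D}(\mc A')}u(x)=P_{J_{\mc A'}(x)}u(x)$. For the reverse inequality the paper works by hand: it fixes a countable dense set $\{d_n\}$, locates a positive-measure set $\Delta_{i_0}$ on which the fiberwise angle exceeds $c+\epsilon$ and on which $P_{J_{\mc A'}(x)}P_{J_{\mc A}(x)}d_{i_0}$ is comparably large, and uses the explicit witness $h(x)=P_{J_{\mc A}(x)}d_{i_0}$ on $\Delta_{i_0}$ (and $0$ elsewhere) to contradict $c<\tilde c$; measurability of $x\mapsto\mathfrak S(J_{\mc A}(x),J_{\mc A'}(x))$ is handled there via the weakly measurable functions $s_n(x)=\langle P_{J_{\mc A}(x)}|_{J_{\mc A'}(x)}d_n,d_n\rangle$. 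You instead recognize $P_{\mc S_{\mc D}(\mc A')}|_{\mc S_{\mc D}(\mc A)}$ as an MI operator whose range operator is $P_{J_{\mc A'}(x)}|_{J_{\mc A}(x)}$ and invoke the norm identity for MI operators from the Bownik--Iverson correspondence; this packages the measurable-selection work into a cited theorem and is arguably cleaner, at the cost of a heavier external input (you still owe the reader the measurability of $x\mapsto\|P_{J_{\mc A'}(x)}|_{J_{\mc A}(x)}\|$, which the paper's $s_n$ device supplies, and the observation that the fiberwise angle vanishes off $\Omega$, which you correctly note). For the Gramian formula the paper merely records that the fibers $\{\varphi_i(x)\}$ and $\{\psi_i(x)\}$ are frames for $J_{\mc A}(x)$ and $J_{\mc A'}(x)$ a.e.\ and cites Kim--Kim--Lim, Theorem 2.1; your Rayleigh-quotient computation with $P_{J_{\mc A'}(x)}=T_{\mc A'}(x)^*G_{\mc A'}(x)^{\dagger}T_{\mc A'}(x)$ and the substitutions $h=T_{\mc A}(x)^*c$, $c=(G_{\mc A}(x)^{\dagger})^{1/2}d$ is exactly a self-contained proof of that cited lemma, so the mathematical content coincides and your version buys self-containedness. (One cosmetic point: with the paper's convention $G_{\mc A,\mc A'}(x)=T_{\mc A}(x)T_{\mc A'}^*(x)$ your computation lands on the adjoint of the displayed operator with the outer factors interchanged; this has the same norm by the symmetry $\mathfrak S(E,F)=\mathfrak S(F,E)$, so nothing is lost, but it is worth saying explicitly.)
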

Our second main result Theorem $(\ref{T:closedness})$ provides the condition under which the sum of two MI spaces will be closed in terms of supremum cosine angle. In addition to that, we have shown that the similar condition is being preserved by the corresponding fiber spaces almost everywhere. The result depicts that the sum of two MI spaces will be closed if and only if the supremum cosine angles between the corresponding fiber spaces less than 1, almost everywhere. This result is an abstract version of  \cite[Lemma 3.6]{kim2006supremum} for MI spaces. 
\begin{thm}\label{T:closedness}
	In addition to the standing assumptions as in Theorem $\ref{T:sup-pointwise}$, let
	$$\Omega^{'}:= \Omega \backslash\sigma \big(S_{\mc D}(\mc A) \cap S_{\mc D}(\mc A')\big).$$
	Then the following are equivalent:	
	\begin{enumerate}
		\item[(i)]  	
		$\mc S_{\mc D}(\mc A)|_{\Omega^{'}}+\mc S_{\mc D}(\mc A')|_{\Omega^{'}}$ is closed. 
		\item[(ii)]
		$\mathfrak S(\mc S_{\mc D}(\mc A)|_{\Omega^{'}},\mc S_{\mc D}(\mc A')|_{\Omega^{'}})<1$.
		\item[(iii)]
		$J_{\mc A}(x)+J_{\mc A'}(x)$ is closed for 
		a.e. $x\in \Omega'$.
		\item[(iv)]
		$\mathfrak S(J_{\mc A}(x), J_{\mc A'}(x))<1$ for 
		a.e. $x\in \Omega'$.
	\end{enumerate}
\end{thm}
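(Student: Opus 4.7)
The plan is to combine two classical ingredients with the pointwise description of $\mathfrak S$ supplied by Theorem \ref{T:sup-pointwise}. The first ingredient is the Tang--Deutsch characterization (used in Kim et al.\ in the shift-invariant setting): for closed subspaces $E, F$ of a Hilbert space with $E \cap F = \{0\}$, the algebraic sum $E + F$ is closed if and only if $\mathfrak S(E, F) < 1$. The second is the structural fact that the range function of an intersection of two MI spaces is a.e.\ the pointwise intersection of their range functions. In view of the definition of $\Omega'$, this guarantees that both the restricted spaces $\mc S_{\mc D}(\mc A)|_{\Omega'}, \mc S_{\mc D}(\mc A')|_{\Omega'}$ and the fibers $J_{\mc A}(x), J_{\mc A'}(x)$ for a.e.\ $x \in \Omega'$ have trivial intersection, which is precisely the hypothesis needed to apply the classical result both globally and pointwise.

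For (i) $\Leftrightarrow$ (ii), I would apply the Tang--Deutsch theorem in the ambient Hilbert space $L^2(\Omega'; \mc H)$ to the pair of restricted MI spaces; for (iii) $\Leftrightarrow$ (iv), I would apply the same theorem pointwise in $\mc H$ for a.e.\ $x \in \Omega'$. To bridge the global and pointwise sides I would invoke Theorem \ref{T:sup-pointwise} applied to the restrictions, giving
\[
\mathfrak S\bigl(\mc S_{\mc D}(\mc A)|_{\Omega'},\, \mc S_{\mc D}(\mc A')|_{\Omega'}\bigr) = \ess-sup_{x \in \Omega'} \mathfrak S\bigl(J_{\mc A}(x),\, J_{\mc A'}(x)\bigr).
\]
The direction (ii) $\Rightarrow$ (iv) is immediate. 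For the converse I would not attempt to promote an a.e.\ bound to an essential-supremum bound directly; instead I would close the cycle as (iv) $\Rightarrow$ (iii) $\Rightarrow$ (i) $\Rightarrow$ (ii), so that the essential-supremum upgrade is produced as a by-product of the global Tang--Deutsch step rather than proved in isolation.

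The main obstacle is the implication (iii) $\Rightarrow$ (i), i.e.\ upgrading a.e.\ fiberwise closedness to closedness of the global sum. The plan here is to identify $\overline{\mc S_{\mc D}(\mc A) + \mc S_{\mc D}(\mc A')}$ with the MI space whose range function is $x \mapsto J_{\mc A}(x) + J_{\mc A'}(x)$ (which under (iii) is already closed a.e.), and then use a measurable-selection argument on the fibers to decompose any element of that closure as $f_1 + f_2$ with $f_i \in \mc S_{\mc D}(\mc A_i)$. The fiberwise oblique projections onto $J_{\mc A}(x)$ along $J_{\mc A'}(x)$ (which exist by pointwise Tang--Deutsch on $\Omega'$) are assembled into a measurable field, and their uniform $L^2$-boundedness is obtained by exhausting $\Omega'$ by measurable sets on which $\mathfrak S(J_{\mc A}(x), J_{\mc A'}(x))$ is uniformly bounded away from $1$, and then using the frame/range-function calculus of Section \ref{S:Preliminaries} to pass to the limit. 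The measurability and completeness of $(X, \mu_X)$, together with the range-function formalism, are what make this upgrade possible, and I expect this to be the most delicate point of the argument; the remaining implications then fall into place through the classical ingredient and Theorem \ref{T:sup-pointwise}.
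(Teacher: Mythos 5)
Your architecture agrees with the paper's on the easy implications: (i) $\Leftrightarrow$ (ii) and (iii) $\Leftrightarrow$ (iv) come from the Tang characterization (sum closed and intersection trivial iff $\mathfrak S<1$) once Proposition \ref{P:Spectrum-Property} and the definition of $\Omega'$ guarantee trivial intersections both globally and fiberwise, and (ii) $\Rightarrow$ (iv) is immediate from Theorem \ref{T:sup-pointwise}. You were right to distrust the remaining implication: the paper simply declares that $\mathfrak S(\mc S_{\mc D}(\mc A)|_{\Omega'},\mc S_{\mc D}(\mc A')|_{\Omega'})<1$ holds if and only if $\mathfrak S(J_{\mc A}(x),J_{\mc A'}(x))<1$ for a.e.\ $x$, which silently promotes an almost-everywhere strict inequality to a bound on the essential supremum. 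Your instinct to route around this via (iv) $\Rightarrow$ (iii) $\Rightarrow$ (i) is sound in spirit, but the detour leads into the same wall.

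The gap is that (iii) $\Rightarrow$ (i) is not merely delicate: it is false, so no measurable-selection argument can establish it. The fiberwise oblique projection onto $J_{\mc A}(x)$ along $J_{\mc A'}(x)$ has norm comparable to $\bigl(1-\mathfrak S(J_{\mc A}(x),J_{\mc A'}(x))^2\bigr)^{-1/2}$, so on your exhausting sets where $\mathfrak S\le 1-1/n$ the assembled projection has operator norm blowing up with $n$; there is no uniform $L^2$ bound with which to pass to the limit, and the pointwise decomposition $f=f_1+f_2$ it yields need not lie in $L^2(X;\mc H)$. Concretely, take $X=[0,1]$ with $\mc D=\{e^{2\pi i n x}\}_{n\in\Z}$, $\mc H=\ell^2$, $\mc A=\{\varphi\}$, $\mc A'=\{\psi\}$ where $\varphi(x)=e_1$ and $\psi(x)=\cos(x)e_1+\sin(x)e_2$. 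For a.e.\ $x$ the fibers are distinct lines, so every fiber intersection is trivial, every fiber sum is two-dimensional and hence closed, and $\Omega'=\Omega=[0,1]$: statements (iii) and (iv) hold. But $\mathfrak S(J_{\mc A}(x),J_{\mc A'}(x))=\cos x\to 1$ as $x\to 0$, so by Theorem \ref{T:sup-pointwise} the global supremum cosine angle equals $\ess-sup_{x}\cos x=1$, and Tang's theorem (with trivial intersection) forces $\mc S_{\mc D}(\mc A)+\mc S_{\mc D}(\mc A')$ to be non-closed: (i) and (ii) fail. The same example refutes the paper's own ``(ii) iff (iv)'' step, which is the identical error in different form. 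The repair is to replace (iv) by the essential-supremum condition $\ess-sup_{x\in\Omega'}\mathfrak S(J_{\mc A}(x),J_{\mc A'}(x))<1$ (and to drop or correspondingly strengthen (iii)), as in the correct formulation of Kim--Kim--Lim; with that modification both your argument and the paper's collapse to the already-established equivalences.
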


%
%The following Lemma expresses that the mixed dual Gramian for the fibers can be expressed in terms of a projection map.
%\begin{lemma}\label{L:mixedGramian}Given two collections let $\mc A=\{\varphi_i:i\in I\}$ and $\mc A'=\{\psi_i:i\in I\}$ in $\mc H$. let the fibers
%	$\mc A(x)=\{F\varphi_i(x): i \in I\}, \mc A'(x)=\{\psi_i: i \in I\}$ be the Bessel systems. Then then the mixed dual Gramian for $\mc A(x)$ and $\mc A'(x)$ is $G_{\mc A'(x), \mc A(x)}=T_{\mc A'(x)}PT_{\mc A(x)}$, where $P=P_{\mc A'(x)|_{\mc A(x)}}.$
%\end{lemma}
%\begin{proof}
%	Calculating the following, for $\beta\in J_{ \mc A(x)}=\ol{\mbox{span}}\{F\varphi_i(x):i \in I\}$, we have, 
%	\begin{align*}
	%T_{\mc A'(x)}PT_{\mc A(x)}\beta
	%&=T_{\mc A'(x)}^*P(\sum_{j\in I }c_j\varphi_j(x)
	%=T_{\mc A'(x)}^*(\sum_{j\in I }c_jP\varphi_j(x))\\
	%	&=\left\langle\sum_{j\in I}c_jP\varphi_j(x), F\psi_i(x)\right\rangle_{i\in I}
	%	=\left(\sum_{j=1 }^{\infty}c_j\left\langle P\varphi_j(x), F\psi_i(x)\right\rangle\right)_{i\in I}\\
	%	&=\left(\sum_{j\in I }c_j\left\langle P_{\mc A'(x)} \varphi_j(x), F\psi_i(x)\right\rangle\right)_{i\in I}\\
	%	&=\left(\sum_{j\in I}c_j\left\langle \varphi_j(x),P_{\mc A'(x)}  F\psi_i(x)\right\rangle\right)_{i\in I}\\
	%	&=\left(\sum_{j\in I}c_j\left\langle \varphi_j(x),  F\psi_i(x)\right\rangle\right)_{i\in I}\\
	%	&=G_{\mc A'(x), \mc A(x)}\beta
	%	\end{align*}
%	
%\end{proof}

Before proving the above results, we first deduce a couple of propositions. The following Proposition  $(\ref{P:Spectrum-Property})$ is an abstract version of \cite[Lemmas 3.3 and 3.4]{cabrelli2009samplingoperator}.
\begin{prop}\label{P:Spectrum-Property}
	Let  $\mc A=\{\varphi_i: i\in I\}$ and $\mc A'=\{\psi_i: i\in I\}$ be two countable collections of functions in $L^2(X;\mc H).$ For the MI spaces  $S_{\mc D}(\mc A)$ and $S_{\mc D}(\mc A')$  in $L^2(X; \mc H)$,  the following hold true: 
	\begin{enumerate}
		\item[(i)]	If  $S_{\mc D}(\mc A)\cap S_{\mc D}(\mc A')=\{0\}$, then $J_{\mc A}(x)\cap J_{\mc A'}(x)=\{0\}$ for a.e. $x \in X$.
		\item[(ii)]  $\sigma\big(S_{\mc D}(\mc A)\cap S_{\mc D}(\mc A')\big)=\{x \in X: J_{\mc A}(x)\cap J_{\mc A'}(x)\neq \{0\}\}\ss \sigma(S_{\mc D}(\mc A))\cap \sigma(S_{\mc D}(\mc A'))$.
	\end{enumerate}
\end{prop}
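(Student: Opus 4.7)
The plan is to reduce both parts to the pointwise identification
$$J_W(x) = J_{\mc A}(x) \cap J_{\mc A'}(x) \quad \text{for a.e. } x \in X,$$
where $W := \mc S_{\mc D}(\mc A) \cap \mc S_{\mc D}(\mc A')$ and $J_W$ denotes its range function. Granting this, part (i) follows because $W = \{0\}$ forces $J_W(x) = \{0\}$ a.e. For part (ii), the first equality becomes $\sigma(W) = \{x : J_{\mc A}(x) \cap J_{\mc A'}(x) \neq \{0\}\}$, while the containment in $\sigma(\mc S_{\mc D}(\mc A)) \cap \sigma(\mc S_{\mc D}(\mc A'))$ is immediate, since a nonzero vector common to the two fibers certifies that neither fiber is zero.

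First I would verify that $W$ is MI: for $\phi \in \mc D$ and $f \in W$, the element $M_\phi f$ lies in each of the two MI subspaces and hence in $W$. The characterization theorem then produces a measurable range function $J_W$ with $W = V_{J_W}$. The containment $J_W(x) \subseteq J_{\mc A}(x) \cap J_{\mc A'}(x)$ a.e.\ is the easy direction, and I would obtain it from the routine fact that $U \subseteq V$ for MI spaces implies the pointwise inclusion of their measurable range functions a.e.; apply this to $W \subseteq \mc S_{\mc D}(\mc A)$ and $W \subseteq \mc S_{\mc D}(\mc A')$ separately and intersect.

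The main obstacle is the reverse inclusion. My plan is to introduce the set-valued map $\widetilde J(x) := J_{\mc A}(x) \cap J_{\mc A'}(x)$ and verify that it is itself a measurable range function. This can be done via von Neumann's intersection formula
$$P_{\widetilde J(x)} = \lim_{n \to \infty} \left( P_{J_{\mc A}(x)} P_{J_{\mc A'}(x)} \right)^n$$
in the strong operator topology, whose measurable dependence on $x$ is inherited from the two factor projections. Once $\widetilde J$ is known to be a measurable range function, $V_{\widetilde J}$ is a well-defined MI subspace of both $\mc S_{\mc D}(\mc A)$ and $\mc S_{\mc D}(\mc A')$, and hence of $W = V_{J_W}$. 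Applying the easy direction once more yields $\widetilde J(x) \subseteq J_W(x)$ a.e., which combined with the previous inclusion produces the desired identification and finishes the proof.
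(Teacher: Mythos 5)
Your proposal is correct, but it takes a genuinely different route from the paper. The paper proves (i) directly by a measurable-selection contradiction: assuming $J_{\mc A}(x)\cap J_{\mc A'}(x)\neq\{0\}$ on a set of positive measure, it fixes a countable dense subset $\{d_n\}$ of $\mc H$, partitions that set according to the least $n$ with $P_{J_{\mc A}(x)\cap J_{\mc A'}(x)}d_n\neq 0$, and glues the fiberwise projections of one $d_N$ over a positive-measure piece into a nonzero element of $\mc S_{\mc D}(\mc A)\cap \mc S_{\mc D}(\mc A')$. For (ii) it sets $\msc W=\mc S_{\mc D}(\mc A)\cap \mc S_{\mc D}(\mc A')$, writes $\mc S_{\mc D}(\mc A)=E'\oplus\msc W$ and $\mc S_{\mc D}(\mc A')=F'\oplus\msc W$ with $E',F'$ again MI, passes these orthogonal decompositions to the fibers, and applies part (i) to $E'\cap F'=\{0\}$ to force any nonzero $h\in J_{\mc A}(x)\cap J_{\mc A'}(x)$ into $\msc W(x)$. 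You instead prove the single identity $J_{\msc W}(x)=J_{\mc A}(x)\cap J_{\mc A'}(x)$ a.e.\ and read off both parts; your only nontrivial input is the measurability of $x\mapsto P_{J_{\mc A}(x)\cap J_{\mc A'}(x)}$, which the von Neumann alternating-projection formula delivers (products of weakly measurable operator functions are weakly measurable since $\mc H$ is separable, and a strong limit of measurable functions is measurable). Note that your final step quietly uses the full bijective correspondence between measurable range functions and MI spaces --- specifically that the range function of $V_{\widetilde J}$ is $\widetilde J$ itself a.e.\ --- but this is available in the cited references, so it is not a gap. What your route buys is a stronger, reusable conclusion (the fiberwise intersection formula, which is in fact what the paper's proof of (ii) establishes implicitly) obtained in one pass, without bootstrapping (i) into (ii); what the paper's route buys is that it avoids the von Neumann limit and the measurability discussion, staying within the elementary range-function toolkit it has already set up.
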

\begin{proof}
	(i)	On the contrary, let us assume $J_{\mc A}(x)\cap J_{\mc A'}(x)\neq \{0\}$ for a.e. 
	$x \in X$, then there exists a measurable set $\Delta \subset X$ such that $\mu_{X}({\Delta})>0$ and for all $x\in \Delta$, $J_{\mc A}(x)\cap J_{\mc A'}(x)\neq \{0\}$. Let $\{d_1, d_2,\dots, d_n,\dots\}$ be a countable dense subset of $\mc H$ and $U(x)=J_{\mc A}(x)\cap J_{\mc A'}(x)$. Since $U(x)\neq \{0\}$ on $\Delta,$ then for each $x\in \Delta$, there exists $d_n$ such that $P_{U(x)}d_n\neq 0$. Let $i(x)$ be the least positive integer for which $P_{U(x)d_{i(x)}}\neq 0$. Consider the set $\Delta_n=\{x\in \Delta: i(x)=n\}$, then it is clear that $\Delta=\oplus_{n \in \mathbb N}\Delta_n$. Since $\mu_{X}(\Delta)>0,$ there exists $N\in \mathbb N$ such that $\Delta_N$ has positive measure. Define a function $f\in L^2(X; \mc H)$ such that : $$f(x)=\begin{cases}
		P_{U(x)}d_N, \ \mbox{when}  \ x\in \Delta_N,\\
		0, \  \mbox{otherwise}.
	\end{cases}$$
	Then using  \cite[Theorem 3.8]{bownik2015structure}
	, $0\neq f\in S_{\mc D}(\mc A)\cap 
	S_{\mc D}(\mc A')$,  a contradiction.
	
	(ii) We prove that $\sigma\big(S_{\mc D}(\mc A)\cap S_{\mc D}(\mc A')\big)=\{x \in X: J_{\mc A}(x)\cap J_{\mc A'}(x)\neq \{0\}\}$ as the later containment of sets is quite obvious. Let $\msc W:=\mc S_{\mc D}(\mc A)\cap \mc S_{\mc D}(\mc A')$, $E':=\mc S_{\mc D}(\mc A)\ominus  \msc W$ and $F'=\mc S_{\mc D}(\mc A') \ominus \msc W$, then $\msc W,E', F'$ are multiplication invariant  spaces, and $\mc S_{\mc D}(\mc A)=E'\oplus \msc W$ and $\mc S_{\mc D}(\mc A')=F'\oplus \msc W$. Moreover, $ J_{\mc A}(x)=E'(x)\oplus\msc W(x)$ and $J_{\mc A'}(x)=F'(x)\oplus\msc W(x)$, by the pointwise projection property of $\mc D$-multiplication spaces. Now if $x\in \sigma(\mc S_{\mc D}(\mc A)\cap \mc S_{\mc D}(\mc A'))=\sigma(\msc W)$, then $\{0\}\neq \msc W(x)\ss J_{\mc A}(x)\cap J_{\mc A'}(x)$.
	
	Conversly, let $x \in X$ is such that $J_{\mc A}(x)\cap J_{\mc A'}(x)\neq \{0\}.$ Then, there exists 
	$0 \neq h \in J_{\mc A}(x)\cap J_{\mc A'}(x).$ Also there exists $h_1 \in E'(x), h_2 \in F'(x)$ and $w_1, w_2 \in \msc W(x)$ such that $h = h_1 + w_1 = h_2 + w_2.$ Since $h_1-h_2 \in \msc W(x)^\perp$ 
	and $w_1-w_2 \in  \msc W(x)$ implies $h_1 =h_2$ and $w_1 =w_2.$ Therefore, $h = h_1 + w_1$ with $h_1 \in E'(x) \cap F'(x).$ Also it is clear that, $E' \cap F'= \{0\}$ which further by Proposition \ref{P:Spectrum-Property} implies $E'(x) \cap F'(x) =\{0\}.$ Hence, $h_1=0$ and $0 \neq h \in \msc W(x) = \big(\mc S_{\mc D}(\mc A)\cap \mc S_{\mc D}(\mc A')\big)(x).$
%It is easy to see that $\{x\in X: J_{\mc A}(x)\cap J_{\mc A'}(x)\neq \{0\}\}\ss \sigma(\mc S_{\mc D}(\mc A))\cap \sigma(\mc S_{\mc D}(\mc A'))$. 
%	 Suppose on the other hand there exist $a\in \mc H\backslash\{0\}$ such that $a\in E(x)\cap F(x)$. Then there exist $b\in E'(x), b'\in F'(x)$ and $c, c'\in \msc W(x)$ such that $0\neq a=b+c=b'+c'$. Therefore $b-b'=c-c'$. $b-b'\in \msc W(x)^\perp$ and $c-c'\in \msc W(x)$ we have $0=b-b'=c'-c$, which implies $0\neq a=b\oplus c$ and $b\in E(x)\cap F(x)$ and $c\in \msc W(x)$. Also note that 
%	 $$E'\cap F'=[E-(E\cap F)]\cap[F-(E\cap F)]=(E\cap F)-(E\cap F)=\{0\}$$
%	 Lemma \ref{L:Zerospect} implies that $E'(x)\cap F'(x)=\{0\}$ a.e. Therefore $b=0$. Hence there exist a non-zero vector $a=c\in \msc W(x)=(E\cap F)(x)$. This implies for a.e. $x\in X$, $E(x)\cap F(x)\neq \{0\}$, $x\in\sigma(E\cap F).$
\end{proof}
The next proposition proves that the restriction of a  multiplication invariant space to a measurable subset is again multiplication invariant.
The following result is an abstract version of  \cite[Lemma 2.5]{kim2006supremum}.
\begin{prop}\label{Restriction-space} For a countable 
	collection of functions $\mc A$ in $L^2(X;\mc H),$ consider the MI space $S_{\mc D}(\mc A)$  and its restriction to a measurable subset  $\Lambda$ of 
	$X$ defined by, $$S_{\mc D}(\mc A)|_{\Lambda}:=\{f\in S_{\mc D}(\mc A): f(x)=0\  \mbox{a.e. on} \  X\backslash \Lambda\}.$$ Then $S_{\mc D}(\mc A)|_{\Lambda}$ is also an MI space.
\end{prop}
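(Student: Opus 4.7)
The plan is to verify directly that $S_{\mc D}(\mc A)|_\Lambda$ meets the three requirements of Definition \ref{MIOpDefn}: it is a linear subspace of $L^2(X;\mc H)$, it is closed, and it is invariant under $M_\phi$ for every $\phi \in \mc D$. The subspace property is immediate from linearity of the pointwise vanishing condition on $X \setminus \Lambda$: if $f,g \in S_{\mc D}(\mc A)|_\Lambda$ and $\alpha \in \mathbb C$, then $\alpha f + g \in S_{\mc D}(\mc A)$ and $(\alpha f + g)(x) = 0$ a.e.\ on $X \setminus \Lambda$.

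For closedness, I would take a sequence $\{f_n\} \subset S_{\mc D}(\mc A)|_\Lambda$ converging in $L^2(X;\mc H)$ to some $f$. Since $S_{\mc D}(\mc A)$ is already closed, $f \in S_{\mc D}(\mc A)$; and passing to a subsequence $f_{n_k}$ that converges pointwise a.e.\ on $X$ allows the identity $f_{n_k}(x)=0$ a.e.\ on $X \setminus \Lambda$ to be transferred to the limit $f$. For multiplication invariance, given $f \in S_{\mc D}(\mc A)|_\Lambda$ and $\phi \in \mc D$, I note $M_\phi f \in S_{\mc D}(\mc A)$ because $S_{\mc D}(\mc A)$ is itself MI, and $(M_\phi f)(x) = \phi(x) f(x) = 0$ a.e.\ on $X \setminus \Lambda$, so $M_\phi f \in S_{\mc D}(\mc A)|_\Lambda$.

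An equivalent and more conceptual viewpoint is via range functions: set $J_\Lambda(x) := J_{\mc A}(x)$ for $x \in \Lambda$ and $J_\Lambda(x) := \{0\}$ otherwise. Measurability of $J_\Lambda$ follows from $\langle P_{J_\Lambda(x)} u, v\rangle = \chi_\Lambda(x) \langle P_{J_{\mc A}(x)} u, v\rangle$, and a direct check yields $V_{J_\Lambda} = S_{\mc D}(\mc A)|_\Lambda$, so the characterization theorem recalled above supplies the MI property at once. There is no serious obstacle in this argument; the only technicality is the subsequence extraction needed to push the vanishing-off-$\Lambda$ condition through the $L^2$ limit.
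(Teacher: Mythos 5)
Your argument is correct and its core — taking a sequence in $S_{\mc D}(\mc A)|_{\Lambda}$ converging in $L^2(X;\mc H)$, using closedness of $S_{\mc D}(\mc A)$, and transferring the vanishing-off-$\Lambda$ condition to the limit — is exactly the paper's proof. You are in fact more complete than the paper, which checks only closedness and leaves the $M_\phi$-invariance (your third step, and the range-function reformulation) implicit.
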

\begin{proof}
	Let $\{u_n\}_{n\in \mathbb N}$ be a  sequence in $S_{\mc D}(\mc A)|_{\Lambda}$ such that $u_n\longrightarrow u$ in $L^2(X;\mc H)$ as $n \longrightarrow \infty$ for some $u\in L^2(X;\mc H)$. Since $S_{\mc D}(\mc A)$ is closed and $S_{\mc D}(\mc A)|_{\Lambda}\ss S_{\mc D}(\mc A)$, we have $u\in S_{\mc D}(\mc A)$. Also note that,  supp $(u_n)=\Lambda$ implies supp $(u)=\Lambda$. Hence, $u\in S_{\mc D}(\mc A)|_{\Lambda}.$ Therefore $S_{\mc D}(\mc A)|_{\Lambda}$ is closed and an MI space.
\end{proof}

	We are now ready to prove our main results Theorems (\ref{T:sup-pointwise}) and  (\ref{T:closedness}).
	
	\begin{proof}[Proof of Theorem \ref{T:sup-pointwise} ]
		
		Let $\{d_n:n\in \mathbb N\}$ be an orthonormal basis of $L^2(X;\mc H)$. For each $n \in \mathbb N,$ define a function $s_n:X\rightarrow \mathbb R, x\mapsto s_n(x):=\langle P_{J_{\mc A}(x)}|_{J_{\mc A'}(x)} d_n, d_n\rangle.$ By definition of range function, the sequence $s_n$ is measurable. Denote $s: = \sup_{n \in \mathbb{N}} s_n$ defined by, $x\mapsto \mathfrak S( J_{\mc A}(x),  J_{\mc A'}(x))=\sup_{n \in \mathbb{N}}\langle P_{J_{\mc A}(x)}|_{J_{\mc A'}(x)} d_n, d_n\rangle=\left\|P_{ J_{\mc A'}(x)}|_{ J_{\mc A}(x)}\right\|.$ Then the map $s$ is weakly measurable, where the last equality holds because projection maps are self adjoint. Let $c=\mathfrak S(S_{\mc D}(\mc A),S_{\mc D}(\mc A'))=\|P_{S_{\mc D}(\mc A')}|_{S_{\mc D}(\mc A)}\|$ and $\tilde c=\ess-sup_{x\in \Omega}\big\{\mathfrak S\big(J_{\mc A}(x), J_{\mc A'}(x)\big)\big\}$. For $u\in S_{\mc D}(\mc A)$, we have $\|P_{J_{\mc A'}(x)}u(x)\|\leq \mathfrak S(J_{\mc A}(x), J_{\mc A'}(x))\|u(x)\|,$   for a.e. $x \in X$ and hence 
		\begin{align*}
			\|P_{S_{\mc D}(\mc A')}(u)\|^2=\int_{X}\| P_{S_{\mc D}(\mc A')}(u)(x)\|^2\ \dx&= \int_{X} \|P_{J_{\mc A'}(x)}u(x)\|^2\ \dx=\int_{\Omega}\|P_{J_{\mc A'}(x)}u(x)\|^2\ 
			\dx\\
			&\leq \tilde{c}^2\int_{X}\|u(x)\|^2\ \dx \leq \tilde{c}^2 \|u\|^2,
		\end{align*}
		by observing $P_{S_{\mc D}(\mc A')}(u)(x) = P_{J_{\mc A'}(x)}u(x)$ given in   \cite[Proposition 2.2]{bownik2015structure}. Taking the supremum over all $u\in S_{\mc D}(\mc A)$ we have $c\leq \tilde c$.
		
		By choosing $c<\tilde{c},$ there exist  $\epsilon>0$ and a  measurable subset $\Delta\ss \Omega$, with $\mu_{X}{(\Delta)}>0$ such that $u+\epsilon<\mathfrak S(J_{\mc A}(x),J_{\mc A'}(x))$ for a.e. $x\in X$.  Construct $\Delta_i=\{x\in S_{\mc D}(\mc A): 0<(c+\epsilon)\|P_{J_{\mc A}(x)}d_i\|\leq \| P_{J_{\mc A'}(x)}(P_{J_{\mc A}(x)}d_i)\|\}.$
		Then $X=\bigcup_{i=1}^{\infty}\Delta_i$. Hence there exists $i_0$ such that $\mu_{X}(\Delta_{i_0})>0.$ Define a function $h$ by$$h(x)= \begin{cases}
			P_{J_{\mc A}(x)}d_i,\  \mbox{if  } x \in \Delta_{i_0},\\
			0, \  \mbox{otherwise}.
		\end{cases}$$
		Then, $h\in S_{\mc D}(\mc A)$ and hence the following calculation
		\begin{align*}
			\|P_{S_{\mc D}(\mc A')}h\|^2&=\int_{\Omega}\|P_{J_{\mc A'}(x)}h(x)\|^2\ \dx=\int_{\Delta_{i_0}}\|P_{J_{\mc A'}(x)}P_{J_{\mc A}(x)}d_{i_0}\|^2\ \dx\\
			&\geq (c+\epsilon)^2 \int_{\Delta_{i_0}}\|P_{J_{\mc A}(x)}d_{i_0}\|^2\ \dx=(c+\epsilon)^2\int_{ \Delta_{i_0}}\|h(x)\|^2\ \dx=(c+\epsilon)^2\|h\|^2>0,
		\end{align*}
		implies that $c=\mathfrak S(S_{\mc D}(\mc A),S_{\mc D}(\mc A'))\geq \frac{\|P_{S_{\mc D}(\mc A')}h\|}{\|h\|}\geq c+\epsilon$, a contradiction. So $c\geq \tilde c$. Hence $c=\tilde c$.
		
		For the remaining part, first observe that the fibers 
		$\{\varphi_i(x) : i \in I\}$
		and $\{\psi_i(x) : i \in I\}$ are frames for $J_\mc A(x)$ and $J_\mc A'(x)$ for a.e. $x \in X,$ since	$\mc E_{\mc D}(\mc A)$ and $\mc E_{\mc D}(\mc A')$ are frames for $\mc S_\mc D(\mc A)$ and $\mc S_\mc D(\mc A')$ by  \cite[Theorem 2.10]{iverson2015subspaces}. The hypothesis imply that $T_{\mc A}(x), T_{\mc A'}(x), T_{\mc A}^*(x),T_{\mc A'}^*(x), G_{\mc A}(x), G_{\mc A'}(x)$ have closed range and their pseudo inverses exist. Now, the result follows using \cite[Theorem 2.1]{kim2006supremum}. 
	\end{proof}
	Next, we provide a proof of Theorem \ref{T:closedness} by observing that for two closed  subspaces $E$ and $F$ of  $\mc H$,  the sum $E+F$ is closed and $E\cap F=\{0\}$ if and only if $\mathfrak S(E,F)<1$ \cite[Theorem 2.9]{Tang2000obliqueprojection}.
	\begin{proof}[Proof of Theorem \ref{T:closedness} ]
		(i) $\Leftrightarrow$ (ii) By Proposition \ref{Restriction-space},	$\mc S_{\mc D}(\mc A)|_{\Omega'}$ and $\mc S_{\mc D}(\mc A')|_{\Omega'}$ are multiplication-invariant spaces. Now applying Proposition \ref{P:Spectrum-Property}, $\sigma(\mc S_{\mc D}(\mc A)|_{\Omega'}\cap \mc S_{\mc D}(\mc A')|_{\Omega'})=\phi$ therefore $\mc S_{\mc D}(\mc A)|_{\Omega'}\cap \mc S_{\mc D}(\mc A')|_{\Omega'}=\{0\}$. Applying
		\cite[Theorem 2.9]{Tang2000obliqueprojection},
		(i) and (ii)
		are equivalent. The statements (iii) and (iv) are also equivalent by following the same arguments since for a.e. $x \in  \Omega'$, $J_{\mc A}(x)\cap J_{\mc A'}(x)=\{0\}.$ 
		
		The statements (ii) and (iv) are equivalent by noting  $\mathfrak S(\mc S_{\mc D}(\mc A)|_{\Omega^{'}},\mc S_{\mc D}(\mc A')|_{\Omega^{'}})=\ess-sup_{x\in \Omega'}\big\{\mathfrak S\big(J_{\mc A}(x), J_{\mc A'}(x)\big)\big\}$  using Theorem \ref{T:sup-pointwise} and  $\mathfrak S(\mc S_{\mc D}(\mc A)|_{\Omega^{'}},\mc S_{\mc D}(\mc A')|_{\Omega^{'}})<1$ if and only if  $\mathfrak S(J_{\mc A}(x), J_{\mc A'}(x))<1$ for 
		a.e. $x\in \Omega'$. 
	\end{proof}
	In the next section, we will provide an application of the supremum cosine angle for the sampling problem.
\section{Application to the Sampling theory}\label{S:Sampling}

 In this section, we discuss the application part of the of supremum cosine angle. In sampling theory, it is desirable for the sampling operator to be one-to-one for the unique representation of a function.  We find out the conditions under which the sampling operator defined for a Bessel MG system is one-to-one.  We also connect our result with the injectivity of the pointwise sampling operator defined on each fiber space.  The results obtained in this section are an abstract version of results already obtained for closed subspaces of a separable Hilbert space or more generally shift invariant spaces \cite{cabrelli2009samplingoperator,ludo2008samplingoperator}.
We begin with some definitions.
\begin{defn} 
	Let $(\mathcal{M},\mu_\mathcal{M})$ be a $\sigma$-finite measure space and $\mathcal{A} =\{\psi_i\}_{i \in 
		I}\subset L^2(X;\mathcal{H})$ be a set of generators such that the MG system $\mc E_\mathcal{D}(\mathcal{A})$ is Bessel.	
	Then the \textit{sampling operator} associated with the system  $\mc E_\mathcal{D}(\mc A)$ is defined by
	\begin{equation}
		T: L^2(X;\mathcal{H})  \ra L^2( \mc M \times I),\ Tf:= \{\langle f,M_{g_{t}}\psi_i\rangle\}_{t\in \mc M, i \in I}.	
	\end{equation}	
\end{defn}
With every sampling operator $T$, we can associate pointwise sampling operator $\tilde{T}(x)$ of  Bessel family $\{\psi_i(x)\}_{i \in I}$ defined on the fibre space $J_\mc A(x)$ for a.e. $x \in X$ and is given by 
$$\tilde{T}(x): J_\mc A(x) \rightarrow \ell^2(I), \ \tilde{T}(x)(\gamma) = \{<\gamma,\psi_i(x)>\}_{i \in I}. $$
The sampling operator $T$ associated with the Bessel system $\mc E_\mc D(\mc A)$ is the analysis operator $T_\mc A$ of the Bessel system $\mc E_\mc D(\mc A)$. The requirement is to obtain conditions for the sampling operator to be one-to-one to uniquely obtain a signal from its samples. The following result is an abstract version of \cite[Proposition 5.11]{cabrelli2009samplingoperator} for the setting of MI spaces. Additionally, it provides the equivalent conditions for the sampling operator $T$ to be one-to-one in terms of injectivity of the  pointwise sampling operators $\tilde{T}(x)$ for a.e. $x \in X.$ 
%%%%%%%%%%%%%%%%%%%%%%%%%%
% The reconstruction of a signal $f\in \mc H$, from a set of data $\mc A=\{\psi_i\}_{i\in I}$, is one of the main concerns in sampling. The set $\mathcal A$ is called the sampling set. For the unique representation we need the sampling operator $T$ to be one-to-one. Throughout the section, we will discuss the one-to-one property of the sampling operator $T$ using the supremum cosine angle. The study was started in \cite{ludo2008samplingoperator} and later on, \cite{cabrelli2009samplingoperator} for the shift-invariant spaces. Now we will state a measure-theoretic abstraction of the Theorem stated in  \cite[Theorem .......]{ludo2008samplingoperator}, which describes the one-to-one property of the sampling operator on MI spaces.
%\begin{defn}
%A sampling operator $A$ is called stable on $S$ if there exists $0<x\leq \beta <\infty$ such that 
%$$x \|x_1-x_2\|^2 \leq \|Ax_1-Ax_2\|^2\leq \beta \|x_1-x_2\|^2 \forall x_1, x_2\in \mc S.$$
%\end{defn}

\begin{prop}\label{SamplingOp1-1}
	
	Let  $A=\{\psi_i\}_{i\in I}\subset L^2(X; \mc H)$ be a countable family of functions and $\mc D$ be a Parseval determining set such that $\mc E_{\mc D}(\mc A)$ is  Bessel in $L^2(X;\mc H)$ and $T$ be the associated sampling operator. Then for a finite set  $\mc A'=\{\phi_1,\phi_2,...,\phi_r\} \subset L^2(X; \mc H), $ the following are equivalent:
	\begin{enumerate}
		\item[(i)]   The sampling operator $T$ is one-to-one on $S_\mathcal{D}(\mathcal{A'})$.
		\item[(ii)] $\mc E_{\mc D}(\mc A)$ is complete in $S_\mc D(\mc A') $ i.e. $S_\mc D(\mc A')=\ol{\Span }   \  \mc E_{\mc D}(\mc A)$.
		\item[(iii)]  $\tilde{T}(x)$ is one-to-one   on $J_{\mathcal{A'}}(x)$ for a.e. $x\in X$.
		\item[(iv)] For a.e. $x \in X,$ the system $\{\psi_i(x): i \in I\}$ is complete in $J_\mc  A(x).$
		\item[(v)] $\ker (G_{\mc A', \mc A})(x)=\ker( T_{\mc A'}^*(x))$ for a.e. $x \in X$. 
		\item[(vi)] $\dim (\range(G_{\mc A', \mc A})(x))=\dim J_{\mc A'}(x)$ for a.e. $x \in X$.	\end{enumerate}
\end{prop}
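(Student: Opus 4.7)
My plan is to translate each of the six conditions through range functions into a pointwise statement on the fibers $J_\mc A(x)$ and $J_{\mc A'}(x)$, then exploit the finite-dimensionality of $J_{\mc A'}(x)$ (it is spanned by $\phi_1(x),\ldots,\phi_r(x)$) to handle the Gramian conditions via elementary rank--nullity. The bridge between the global operator $T$ and its fiberization $\widetilde T(x)$ is a Parseval-type identity: applying the Parseval determining property of $\mc D$ to the scalar maps $x \mapsto \langle f(x),\psi_i(x)\rangle$ (which lie in $L^1(X)$ by Cauchy--Schwarz) yields
\begin{equation*}
\|Tf\|^2_{L^2(\mc M \times I)} \;=\; \int_X \sum_{i \in I} |\langle f(x),\psi_i(x)\rangle|^2 \, d\mu_X(x) \;=\; \int_X \|\widetilde T(x) f(x)\|^2 \, d\mu_X(x),
\end{equation*}
where $\widetilde T(x)$ is extended to $\mc H$ with $\ker \widetilde T(x) = J_\mc A(x)^\perp$. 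In particular, $Tf = 0$ iff $f(x) \in J_\mc A(x)^\perp$ for a.e. $x$.

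\textbf{Equivalences (i) $\Leftrightarrow$ (iii) $\Leftrightarrow$ (iv).} For $f \in S_\mc D(\mc A')$ one has $f(x) \in J_{\mc A'}(x)$ a.e., so the previous step identifies $\ker(T|_{S_\mc D(\mc A')})$ with those $f \in S_\mc D(\mc A')$ whose fibers lie in $J_{\mc A'}(x) \cap J_\mc A(x)^\perp$ a.e. Since $S_\mc D(\mc A') \cap S_\mc D(\mc A)^\perp$ is itself an MI space with fibers $J_{\mc A'}(x) \cap J_\mc A(x)^\perp$ by the range-function characterization of Section \ref{S:Preliminaries}, the measurable-selection argument of Proposition \ref{P:Spectrum-Property}(i) shows the kernel is trivial iff these fibers vanish a.e. This is (iii), since $\widetilde T(x)|_{J_{\mc A'}(x)}$ has exactly the kernel $J_{\mc A'}(x) \cap J_\mc A(x)^\perp$; and (iv) is then the equivalent fiberwise rephrasing, because the projected family $\{P_{J_{\mc A'}(x)}\psi_i(x)\}_{i \in I}$ is total in the relevant fiber precisely when that intersection is $\{0\}$.

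\textbf{Equivalence (i) $\Leftrightarrow$ (ii) and the Gramian conditions (v), (vi).} The identification above gives $\ker(T|_{S_\mc D(\mc A')}) = S_\mc D(\mc A') \cap S_\mc D(\mc A)^\perp$, and its orthogonal complement inside $S_\mc D(\mc A')$ is the closed span of $\{P_{S_\mc D(\mc A')} h : h \in \mc E_\mc D(\mc A)\}$, which equals $S_\mc D(\mc A')$ exactly when that kernel is $\{0\}$; this yields (i) $\Leftrightarrow$ (ii). For the matrix conditions, I factor $G_{\mc A,\mc A'}(x) = T_\mc A(x) T_{\mc A'}^*(x)$, where $T_{\mc A'}^*(x): \ell^2(\{1,\ldots,r\}) \to J_{\mc A'}(x)$ is surjective. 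Rank--nullity applied to $T_\mc A(x)|_{J_{\mc A'}(x)}$, whose kernel is $J_{\mc A'}(x) \cap J_\mc A(x)^\perp$, gives
\begin{equation*}
\dim \range G_{\mc A,\mc A'}(x) \;=\; \dim J_{\mc A'}(x) - \dim\bigl(J_{\mc A'}(x) \cap J_\mc A(x)^\perp\bigr),
\end{equation*}
and combining with the equality of ranks for an operator and its adjoint delivers (vi) $\Leftrightarrow$ (iii). For (v), the inclusion $\ker T_{\mc A'}^*(x) \subseteq \ker G_{\mc A,\mc A'}(x)$ is automatic, and the reverse inclusion holds precisely when $T_{\mc A'}^*(x)$ sends no nonzero vector into $J_\mc A(x)^\perp$, i.e., when $J_{\mc A'}(x) \cap J_\mc A(x)^\perp = \{0\}$.

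\textbf{Main obstacle.} The delicate step is the measurable-selection argument that transfers triviality of the MI space $S_\mc D(\mc A') \cap S_\mc D(\mc A)^\perp$ to triviality of its fibers a.e. and back; this is the content of Proposition \ref{P:Spectrum-Property}(i) and requires a countable dense subset of $\mc H$ together with measurability of the projection maps. Once that pointwise--global bridge is in place, every remaining equivalence reduces to routine Hilbert-space geometry on each fiber, made particularly clean by the finite-dimensionality of $J_{\mc A'}(x)$.
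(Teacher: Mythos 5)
Your proof is correct, and it follows the same overall strategy as the paper: reduce every condition to the single fiberwise statement $J_{\mc A'}(x)\cap J_{\mc A}(x)^{\perp}=\{0\}$ a.e.\ via the range-function correspondence, and dispose of the Gramian conditions (v)--(vi) by rank--nullity on the finite-dimensional fibers $J_{\mc A'}(x)$. The difference is in how the global--local bridge is built. The paper handles (i)$\Leftrightarrow$(ii) and (iii)$\Leftrightarrow$(iv) by the same orthogonality observation you make, but for (ii)$\Rightarrow$(iii) it simply cites the completeness-transfer theorem of Bownik--Iverson together with \cite[Proposition 3.2]{cabrelli2009samplingoperator}, and it dismisses (i)$\Leftrightarrow$(v)$\Leftrightarrow$(vi) in one sentence as a consequence of finite-dimensionality. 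You instead prove the bridge from scratch: the Parseval-determining-set identity $\|Tf\|^{2}=\int_{X}\sum_{i}|\langle f(x),\psi_i(x)\rangle|^{2}\,d\mu_X(x)$ identifies $\ker(T|_{S_{\mc D}(\mc A')})$ with the MI space $S_{\mc D}(\mc A')\cap S_{\mc D}(\mc A)^{\perp}$, whose triviality is then equivalent a.e.\ to triviality of its fibers by the measurable-selection argument of Proposition \ref{P:Spectrum-Property}(i). This buys a self-contained argument that stays inside the paper's own toolkit, and it also silently repairs two imprecisions in the statement (the literal readings of (ii) and (iv) are vacuous or trivially true as printed; your reading via projections onto $S_{\mc D}(\mc A')$ and $J_{\mc A'}(x)$ is the intended one, consistent with \cite[Proposition 5.11]{cabrelli2009samplingoperator}). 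The only point worth flagging is minor: you should note explicitly that the intersection of two MI spaces is MI with fibers the intersections of the fibers (this is implicit in the proof of Proposition \ref{P:Spectrum-Property}(ii) and is standard), since your kernel identification relies on it.
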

\begin{proof}
	The sampling operator $T$ is one-to-one on $S_\mc D(\mc A')$ if and only if for all $t \in \mc M \text{ and } i \in I, <f, M_{g_t}\psi_i> = 0$  will imply $f=0$ for any $f \in S_\mc D(\mc A')$ which is equivalent to $(ii).$ 
	Therefore statements (i) and (ii) are equivalent. Similarly, statements (iii) and (iv) are equivalent. Also, if (ii) is true,  then by \cite[Theorem 3.5]{bownik2019multiplication}, the corresponding range function $J_{\mc A'}$ satisfies $$J_{\mc A'} (x)=\overline{\Span} \{\psi_i(x) : i \in I\}.$$
	Since $J_{\mc A'}(x)$ is closed, then by \cite[Proposition 3.2]{cabrelli2009samplingoperator}, $\tilde{T}(x)$ is one-to-one on $J_{\mc A'}(x)$ for a.e. $x \in X$ and hence the  statements (ii) and (iii)
	are equivalent. The equivalence of the statements (i), (v), and (vi) follows from the fact that the fiber space $J_{\mc A'}(x)$ is finite dimensional and the range of the synthesis operator $T_{\mc A'}^*(x)$  is $J_{\mc A'}(x)$.  %%%%%%%%%%%%%%%%%%%%%%%%%
%(i) $\Longleftrightarrow$ (iii): For $f\in E$,
% $\langle f, P_E{\psi_i}\rangle = \langle P_Ef, \psi_i\rangle =\langle f, \psi_i\rangle.$
%Hence the result follows.\\
%(i) $\Longleftrightarrow$ (ii):
%From (iii), $T$ is one-to-one on $E$ if and only if $$E=\ol{\Span}  (\mc E_{\mc D}\{P_V \psi_i: i\in I\}),$$ which is further equivalent to by ........
%$$J_E(x)=\overline{\Span}\{P_{J_E(x)}\psi_i(x): i\in I\},\ \mbox{for a.e. }\ x\in X. $$
%So $T$ is one-to-one if and only if $J_E(x)=\overline{\Span}\{P_{J_E(x)}\psi_i(x): i\in I\},\ \mbox{for a.e. }\ x\in X $. Again applying (iii) on $J_E(x)$,  equivalently we can say that for a.e. $x\in X$, $T(x)$ is one-to-one on $J_{ E}(x)$. \\
%
%(ii) $\Longleftrightarrow$ (iv) and (v): Using (iii), $T$ is one-to-one   sampling operator on $V$ if and only if $T(x)$ is one-to-one   sampling operator on $J_{E}(x)$ for a.e. $x\in X$,  which is finite dimensional. Since the range of $T_{\mc A'}^*(x)$  is $E$, (iv) and (v) easily follows.
\end{proof}
In classical sampling theory, the signals to be sampled come from a single space. Practically, signals for sampling should be considered from the union of subspaces. This approach has applications in sparse signal representation, compressed sampling, nonuniform splines, and more. The extension of the sampling problem to a union of subspaces was introduced by Lu and Do \cite{ludo2008samplingoperator} and extended further by M. Anastasio and Cabrelli \cite{cabrelli2009samplingoperator}. In our setup, we are going to take a union of finitely generated multiplication invariant subspaces of $L^2(X;\mathcal{H})$ and discuss the injectivity of sampling operator $T$ on the union.

\textit{{\bf Question:} \label{Q}Corresponding to an arbitrary index set $\Delta$, let $\{\mc N_{\delta}\}_{\delta\in \Delta}$ be the  finitely generated MI spaces in $L^2(X;\mc H)$ and 
	\begin{equation}\label{Q}
		\mc X: =\bigcup _{\delta\in \Delta}\mc N_{\delta}.
	\end{equation}
	When can each element $f\in \mc X$ be uniquely represented by its samples?}

The discussion of injectivity of the sampling operator for a union of MI spaces in the collection $\{\mc N_\delta\}_{\delta \in \Delta}$ comes with a problem. The definition of an injective operator (say $A$)  says that for any $x_1,x_2 $ in the domain of the operator, $Ax_1 = Ax_2$ implies $x_1=x_2.$ The issue while considering the union of subspaces is that the points $x_1$ and $x_2$ can be from different subspaces of the union and exploits the linearity. To deal with the issue, we consider the union of a sum of MI subspaces of $L^2(X;\mc H).$  The idea was originated in \cite{ludo2008samplingoperator} and supported further by \cite{cabrelli2009samplingoperator} where they considered the collection $\{\mc S_\delta\}_{\delta \in \Delta}$ of shift invariant spaces and took the union of a sum of each shift invariant space in the collection. Similarly, we consider the union \begin{equation}\label{chi}
	\mc X: =\bigcup _{\delta\in \Delta}\mc N_{\delta,\theta}
\end{equation}
of the sum of each of the finitely generated MI spaces in the collection $\{\mc N_\delta\}_{\delta \in \Delta}.$ Each space $\mc N_{\delta,\theta}$ in the union has the following form
\begin{equation}\label{Ndeltatheta}
	\mc N_{\delta, \theta}:=\mc N_{\delta}+\mc N_{\theta}=\{u+v: u\in \mc N_{\delta}, v \in \mc N_{\theta}\}
	,\end{equation}
and every vector in the union is a secant vector which has importance in dimensionality reduction \cite{roomhead2000new}.  

Now, we deal with the injectivity of sampling operator $T$ on $\mc X$ given in (\ref{chi}). The conditions for injectivity of the sampling operator for the case of the union of closed subspaces have been passed to those of single space \cite[Proposition 1]{ludo2008samplingoperator}. But here the problem is sum of two MI spaces $\mc N_{\delta, \theta}:=\mc N_{\delta}+\mc N_{\theta}=\{u+v: u\in \mc N_{\delta}, v \in \mc N_{\theta}\}$ need not to be closed and therefore $\mc N_{\delta,\theta}$
need not to be an MI space. This directly shifts the problem into the field of closedness of subspaces where we will take the help of Section \ref{S:Results}. The following Proposition suggests that the closure of the sum of two MI spaces is again an MI space generated by the union of the generators of two MI spaces and thus for the space $\mc N_{\delta,\theta}$ to be an MI space, it is enough for it to be closed. A similar result for the case of shift invariant spaces can be found in \cite{cabrelli2009samplingoperator}.
\begin{prop}\label{Misumclosed}
	For $\mc A, \mc A' \subset L^2(X;\mc H)$, $S_\mc D(\mc A \cup \mc A') = \ol{\mc S_\mc D(\mc A)+ \mc S_\mc D(\mc A')}.$ 
\end{prop}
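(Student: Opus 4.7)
The plan is to prove the equality by showing both inclusions, exploiting only the definition of $\mc S_{\mc D}(\cdot)$ as the closed linear span of the associated multiplication generated system, together with the fact that taking closed spans is monotone with respect to subsets of generators.

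For the forward inclusion $\mc S_{\mc D}(\mc A\cup \mc A')\subseteq \ol{\mc S_{\mc D}(\mc A)+\mc S_{\mc D}(\mc A')}$, I would start from the generating family
$$\mc E_{\mc D}(\mc A\cup \mc A')=\{M_\phi\varphi:\phi\in \mc D,\ \varphi\in \mc A\cup \mc A'\}=\mc E_{\mc D}(\mc A)\cup \mc E_{\mc D}(\mc A').$$
Any element of $\mc E_{\mc D}(\mc A)$ lies in $\mc S_{\mc D}(\mc A)\subseteq \mc S_{\mc D}(\mc A)+\mc S_{\mc D}(\mc A')$, and similarly for $\mc E_{\mc D}(\mc A')$. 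Hence $\mc E_{\mc D}(\mc A\cup \mc A')\subseteq \mc S_{\mc D}(\mc A)+\mc S_{\mc D}(\mc A')$. Taking closed linear span on both sides and noting the right-hand side is a linear subspace, we get the desired inclusion after taking closure.

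For the reverse inclusion, I would observe that $\mc S_{\mc D}(\mc A\cup \mc A')$ is already closed, so it suffices to show $\mc S_{\mc D}(\mc A)+\mc S_{\mc D}(\mc A')\subseteq \mc S_{\mc D}(\mc A\cup \mc A')$. Since $\mc E_{\mc D}(\mc A)\subseteq \mc E_{\mc D}(\mc A\cup \mc A')$, monotonicity of $\ol{\Span}$ yields $\mc S_{\mc D}(\mc A)\subseteq \mc S_{\mc D}(\mc A\cup \mc A')$, and similarly for $\mc A'$. Because $\mc S_{\mc D}(\mc A\cup \mc A')$ is a subspace, it absorbs sums, hence contains $\mc S_{\mc D}(\mc A)+\mc S_{\mc D}(\mc A')$; closedness then yields $\ol{\mc S_{\mc D}(\mc A)+\mc S_{\mc D}(\mc A')}\subseteq \mc S_{\mc D}(\mc A\cup \mc A')$.

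This argument is essentially set-theoretic plus the monotonicity of closed span, so I do not anticipate any real obstacle; the only subtlety worth highlighting is the need to take the closure on the right-hand side, since in general $\mc S_{\mc D}(\mc A)+\mc S_{\mc D}(\mc A')$ need not be closed (as the authors themselves emphasize in the motivation for Theorem \ref{T:closedness}), so the equality really is between $\mc S_{\mc D}(\mc A\cup \mc A')$ and the closure of the sum, not the sum itself.
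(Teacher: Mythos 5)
Your proof is correct and takes essentially the same elementary, definition-unwinding approach as the paper: both arguments reduce to the observation that $\mc E_{\mc D}(\mc A\cup\mc A')=\mc E_{\mc D}(\mc A)\cup\mc E_{\mc D}(\mc A')$ together with linearity and the monotonicity of closed spans. If anything, your double-inclusion write-up is the more careful rendering, since the paper's one-line chain ends with the generating set $\{M_\phi(\varphi+\varphi'):\varphi\in\mc A,\ \varphi'\in\mc A'\}$, which read literally omits the generators $M_\phi\varphi$ and $M_\phi\varphi'$ individually and so, without further comment, could span a proper subspace of $\mc S_{\mc D}(\mc A\cup\mc A')$.
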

\begin{proof}
	The proof follows by the  linearity of multiplication operator $M_\phi$ and observing
	\begin{equation*}
		\begin{aligned}
			\ol{\mc S_\mc D(\mc A)+ \mc S_\mc D(\mc A')}& = 
			\ol{\ol{\Span}\{M_\phi \varphi : \varphi \in \mc A, \phi \in \mc D\} + \ol{\Span}\{M_\phi \varphi : \varphi \in \mc A', \phi \in \mc D\}}\\
			&=\ol{\Span}\{M_\phi(\varphi+\varphi') : \varphi \in \mc A, \varphi' \in \mc A', \phi \in \mc D \}.
		\end{aligned}
\end{equation*}\end{proof}The following result is an abstract version of \cite[Theorem 6.5]{cabrelli2009samplingoperator}.
%Also the one-to-one property of an operator on a subspace does not imply the one-to-one on its closure 
 % Given a set $\Psi:= \{\psi_i\}_{i\in I}$ such that $\mc E_{\mc D}(\Psi)$ is a Bessel in $L^2(X;\mc H)$, the corresponding sampling operator is:
%$$A: L^2(X;\mc H)\ra \ell^2( \mc D \times I), Af= \{\langle f, M_\phi \phi_i\rangle\}_{i\in I, \phi \in \mc D}.$$
%Let for each $\delta, \theta \in \Delta$, the subspaces,
%$$\overline{\mc S_{\delta, \theta}}:= \overline{\mc S_{\delta}+\mc S_{\theta}}$$
%It is clear that the closure of the sum of the two finitely generated multiplication invariant space is again multiplication invariant. 

\begin{thm}\label{SamplingMain}
	Let $\mc  A=\{\psi_i\}_{i\in I}$ be a sequence in   $L^2(X;\mc H)$ such that $\mc E_{\mc D}(\mc A)$ is  Bessel and the supremum cosine angle $$\mathfrak S(\mc N_{\delta},\mc N_{\theta})<1,  \mbox{ for every} \  \delta,\theta \in \Delta.$$ If $\mc A_{\delta,\theta}'$ is  a finite set of generators for $\mc N_{\delta,\theta},$  defined in (\ref{Ndeltatheta}), then the sampling operator associated with $\mc E_\mc D(\mc A)$ 
	is one-to-one for $\bigcup _{\delta, \theta \in \Delta}\mc N_{\delta,\theta}$ if and only if
	$\dim (\range(G_{\mc A_{\delta, \theta}', \mc A})(x))=\dim J_{\mc A'_{\delta,\theta}}(x),$ for a.e. $x \in X$, and $\delta,\theta \in \Delta$.
	
\end{thm}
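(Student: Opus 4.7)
The plan is to combine the supremum-cosine-angle hypothesis (to pass to closed MI spaces) with Proposition \ref{SamplingOp1-1} applied pointwise to each pair $(\delta,\theta)$.

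First, I would unpack the hypothesis $\mathfrak S(\mc N_\delta,\mc N_\theta)<1$. By the characterization of Tang \cite[Theorem 2.9]{Tang2000obliqueprojection} that is already invoked in the proof of Theorem \ref{T:closedness}, the strict inequality forces $\mc N_\delta + \mc N_\theta$ to be closed and $\mc N_\delta\cap \mc N_\theta=\{0\}$. Consequently, $\mc N_{\delta,\theta}=\mc N_\delta+\mc N_\theta$ is closed, and by Proposition \ref{Misumclosed} it coincides with the MI space $\mc S_\mc D(\mc A_\delta \cup \mc A_\theta)$ generated by the union of the generating sets. Thus each $\mc N_{\delta,\theta}$ is a finitely generated closed MI space, with generating collection $\mc A'_{\delta,\theta}$, as required to apply the sampling criterion of Proposition \ref{SamplingOp1-1}.

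Next, I would interpret ``$T$ one-to-one on $\mc X=\bigcup_{\delta,\theta\in\Delta}\mc N_{\delta,\theta}$'' in the sense of Lu--Do and Cabrelli--Anastasio: the union already consists of difference (secant) spaces, so $T$ is one-to-one on $\mc X$ exactly when, for every pair $(\delta,\theta)$, the restriction $T|_{\mc N_{\delta,\theta}}$ is injective. Indeed, any two elements of $\mc X$ that share the same samples differ by an element of some $\mc N_{\delta,\theta}$ lying in $\ker T$, and conversely any such element witnesses non-injectivity on that particular piece of the union. This reduces the problem to a pair-by-pair question about injectivity of $T$ on each closed, finitely generated MI space $\mc N_{\delta,\theta}$.

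Finally, for a fixed pair $(\delta,\theta)$ I would invoke Proposition \ref{SamplingOp1-1} with $\mc A'=\mc A'_{\delta,\theta}$ (finite by hypothesis). Exactly the equivalence (i)$\Leftrightarrow$(vi) there says that $T|_{\mc N_{\delta,\theta}}$ is one-to-one if and only if
\[
\dim\bigl(\range(G_{\mc A'_{\delta,\theta},\mc A})(x)\bigr)=\dim J_{\mc A'_{\delta,\theta}}(x)\qquad\text{for a.e.\ }x\in X.
\]
Quantifying over all $(\delta,\theta)\in\Delta\times\Delta$ gives the stated equivalence.

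The main obstacle, and essentially the only nontrivial step, is the first one: one must be careful to extract closedness of $\mc N_{\delta,\theta}$ from the supremum-cosine-angle hypothesis so that Proposition \ref{SamplingOp1-1} applies. Theorem \ref{T:closedness} is phrased with the restriction $\Omega'=\Omega\setminus\sigma(\mc N_\delta\cap\mc N_\theta)$, but since $\mathfrak S(\mc N_\delta,\mc N_\theta)<1$ forces the trivial intersection $\mc N_\delta\cap\mc N_\theta=\{0\}$, Proposition \ref{P:Spectrum-Property}(i) gives $\Omega'=\Omega$ up to a null set, so Theorem \ref{T:closedness} applies on the full spectrum without modification. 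Once closedness is secured, the remainder of the argument is an appeal to Proposition \ref{SamplingOp1-1} at each pair $(\delta,\theta)$.
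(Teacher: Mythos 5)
Your proposal is correct and follows essentially the same route as the paper's proof: deduce closedness of each $\mc N_{\delta,\theta}$ from the cosine-angle hypothesis, use Proposition \ref{Misumclosed} to recognize it as a finitely generated MI space, reduce injectivity on the union to injectivity on each $\mc N_{\delta,\theta}$ via \cite[Proposition 1]{ludo2008samplingoperator}, and conclude with Proposition \ref{SamplingOp1-1}(i)$\Leftrightarrow$(vi). If anything, your justification of the closedness step (direct appeal to Tang's theorem, with the observation that $\mathfrak S(\mc N_\delta,\mc N_\theta)<1$ forces $\mc N_\delta\cap\mc N_\theta=\{0\}$ so that $\Omega'=\Omega$) is more carefully argued than the paper's citation of Theorem \ref{T:sup-pointwise} for that purpose.
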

%\begin{cor}[Corollary 5.14]
%Let $\mc E_{\mc D}(\mc A)$ be a Bessel sequence in $L^2(X;
%\mc H)$ for some set of function $\Psi$. Let for every $\delta, \theta \in \Delta$, $\Phi_{\delta, \theta}$ be a finite set of generators for $\overline{S}_{\delta, \theta}$. If for each $\delta, \theta \in \Delta$
%$$\dim (\range (G_{\Phi_{\delta, \theta}}\Psi(\omega)))=\dim \overline{S}_{\delta, \theta} \mbox{for a.e.} x\in X.$$
%Then $A$ is an one-to-oneoperator on $\mc X$.
%\end{cor}

\begin{proof}
Using Theorem (\ref{T:sup-pointwise}), $ \mc N_{\delta, \theta}$ is closed  for each $\delta, \theta\in \Delta$,  and then by applying Proposition \ref{Misumclosed}, it is clear that for each $\delta, \theta \in \Delta$ the space $\mc N_{\delta,\theta}$  is an MI space. Also, the sampling operator $T$ is one-to-one on $\bigcup _{\delta,\theta \in \Delta}\mc N_{\delta,\theta}$ if and only if $T$ is one-to-one on $\mc N_{\delta, \theta}$
for each $\delta,\theta \in \Delta$  \cite[Proposition 1]{ludo2008samplingoperator}. Finally, applying Proposition \ref{SamplingOp1-1} will give us the desired result.
\end{proof}

\section{Application to locally compact group}\label{LC Group}

 Let $\mathscr G$ be a second countable locally compact group which need not to be abelian, and  $\Gamma$ be a closed abelian subgroup of $\mathscr G$. A closed subspace  $V$ in $L^2(\mathscr G)$ is said to be   \textit{$\Gamma$-translation invariant ($\Gamma$-TI)} if $L_\xi f \in  V$  for all $f\in  V$
and $\xi \in \Gamma$, where for each $\eta \in \mathscr G$ the \textit{left translation} $L_\eta$ on $L^2 (\mathscr G)$ is defined by    
$$(L_\eta f)(\gamma) = f(\eta^{-1} \gamma), \quad \gamma \in \mathscr G \  \mbox{and }\ f\in L^2(\mathscr G).$$
Translation invariant spaces are widely used in various domains, significant among them are harmonic analysis, signal processing, and time-frequency analysis.
Researchers are often interested in characterizing the class of generators of TI/shift invariant spaces and its properties that allow the reconstruction of any function/signal/image \textit{via} a reproducing formula. For more details, refer to \cite{bownik2015structure}. 

For a  countable family of functions   $\mathscr A\ss L^2(\mathscr G)$, 
let us consider a \textit{$\Gamma$-translation generated ($\Gamma$-TG)} system $\mc E^{\Gamma}(\mathscr A)$ and   its associated  $\Gamma$-translation invariant ($\Gamma$-TI)  space $\mc S^{\Gamma}(\mathscr A)$  generated by $\mathscr A,$ i.e., 
$$
\mc E^{\Gamma}(\mathscr A) :=\{L_\xi \varphi : \varphi \in \mathscr A, \xi\in \Gamma\}  \quad \mbox{and} \quad  \mc S^{\Gamma}(\mathscr A) := \overline{\Span} \ \mc E^{\Gamma}(\mathscr A),
$$ 
respectively.

For  $x \in  \mathscr G$, a right coset of $\Gamma$ in $\mathscr G$ with respect to $x$ is denoted by $\Gamma x$, and for  a function   $f :\mathscr G \rightarrow \mathbb C$, we define a  complex valued    function $f^{\Gamma x}$ on $\Gamma$ by  
$
f^{ \Gamma x}(\gamma)=f(\gamma \, \Xi(\Gamma x)), \quad     \gamma \in  \Gamma, 
$
where   the   space of orbits        $\Gamma\backslash \mathscr G = \{\Gamma x: x \in  \mathscr G\}$ is    the set of all right cosets of $\Gamma $ in $\mathscr G$, and $\Xi : \Gamma \backslash \mathscr G\ra  \mathscr G$ is a \textit{Borel section} for  the quotient space $\Gamma \backslash \mathscr G$. Then the Fourier transform of $f^{ \Gamma x} \in L^1 (\Gamma)$  is given by $\widehat{f^{ \Gamma x}} (\alpha)=\int_{\Gamma} f^{ \Gamma x} (\gamma) \alpha (\gamma^{-1}) \ d\mu_\Gamma (\gamma),$ for $\alpha \in \widehat{\Gamma},$ 
which can be  extended to $L^2 (\Gamma)$.  The \textit{Zak transformation} $\mathcal Z$ of  $f\in L^2(\mathscr G)$ for the pair $(\mathscr G,\Gamma)$  is   defined by
\begin{equation}\label{zak trans}
	(\mc Zf)(\alpha)(\Gamma x) =\widehat{f^{\Gamma x}}(\alpha), \quad a.e. \quad  \alpha \in \widehat{\Gamma} \ \mbox{and} \   \Gamma x\in \Gamma\backslash \mathscr G,
\end{equation}
which is a    unitary linear transformation from $ L^2(\mathscr G)  $ to $L^2(\widehat{\Gamma}; L^2(\Gamma\backslash \mathscr G))$ \cite{ iverson2015subspaces}.  
Note that the Zak transform $\mc Z$ is closely associated with fiberization map $\mathscr F$ when $\mathscr G$ becomes abelian. For a second countable locally compact abelian (LCA) group $\mc G$ and its closed subgroup $\Lambda$,  the \textit{fiberization}   $\mathscr F$ is a unitary map from $L^2 (\mc G)$ to $L^2 ( \widehat{\mc G}/ \Lambda^\perp ; L^2 (\Lambda^\perp))$ given by 
\begin{align}\label{fib}
	(\mathscr F f)(\beta\Lambda^\perp )(x)=\widehat{f} (x \, \zeta (\beta \Lambda^\perp)), x\in \Lambda^\perp, \beta\in \widehat{\mc G}, 
\end{align}
for $f \in L^2 (\mc G)$, where $\Lambda^\perp:=\{\beta\in \widehat{\mc G}: \beta(\lambda)=1, \ \forall \ \lambda \in \Lambda \}$, $\Lambda^\perp\backslash \widehat{\mc G}=\widehat{\mc G}/ \Lambda^\perp$ and  $\zeta : \widehat{\mc G}/ \Lambda^\perp \rightarrow \widehat{\mc G}$ is Borel section which maps compact sets to pre-compact sets. 	 
The  Zak transform and fiberization map  on the Euclidean space $\mathbb R^n$  by the action of integers $\mathbb Z^n$ are 
$$
({\mc Z} f) (\xi, \eta)=\sum_{k \in \mathbb Z^n} f(\xi+k) e^{-2\pi i k\eta}, \ \mbox{and}  \  (\mathscr Ff)(\xi)(k)=\widehat {f}(\xi+k),  
$$
for $k \in \mathbb Z^n$, $\xi, \eta \in \mathbb  T^n$ and $f \in L^1 (\mathbb R^n) \bigcap L^2 (\mathbb R^n)$.

 The Zak transform or fiberization map allows us to handle TI spaces using MI spaces as both converts translation operators   $L_\gamma$ into multiplication operators $M_\phi$ for some suitable function $\phi \in L^\infty(X)$ i.e., for $f \in  L^2 (\mathscr G)$, 
$	(\mc ZL_{\gamma}f)(\alpha) =  (M_{\phi_\gamma}  \mc Zf) (\alpha), \quad 
\mbox{for    a.e.} \ \alpha \in \widehat{\Gamma}\  \mbox{and} \ \gamma \in \Gamma, $ 
where  $M_{\phi_\gamma}$ is the multiplication operator on $L^2(\widehat{\Gamma}; L^2(\Gamma\backslash \mathscr G))$, $\phi_\gamma (\alpha)=\overline{\alpha(\gamma)}$   and $\phi_\gamma \in L^\infty (\widehat{\Gamma})$ for each $\gamma \in \Gamma$. Therefore, our goal can be established by converting the problem of $\Gamma$-TI space $\mc S^{\Gamma}(\msc A)$ into the MI spaces on $L^2 (X; \mc H)$ with the help of Zak transform, where $X=\widehat{\Gamma}$ and $\mc H=L^2(\Gamma\backslash \mathscr G)$. 
% 
%  So it sufficient to study MI spaces and then the entire setting can be transferred into TI spaces using these maps.
% 
% %%%%%%%%%%%%%
% Observe that    the Zak transform $\mc Z$   satisfies the intertwining property with the left translation and multiplication operators,  i.e., for $f \in  L^2 (\mathscr G)$, 
% $	(\mc ZL_{\gamma}f)(\alpha) =  (M_{\phi_\gamma}  \mc Zf) (\alpha), \quad 
% \mbox{for    a.e.} \ \alpha \in \widehat{\Gamma}\  \mbox{and} \ \gamma \in \Gamma, $ 
% where  $M_{\phi_\gamma}$ is the multiplication operator on $L^2(\widehat{\Gamma}; L^2(\Gamma\backslash \mathscr G))$, $\phi_\gamma (\alpha)=\overline{\alpha(\gamma)}$   and $\phi_\gamma \in L^\infty (\widehat{\Gamma})$ for each $\gamma \in \Gamma$. Therefore, our goal can be established by converting the problem of $\Gamma$-TI space $\mc S^{\Gamma}(\msc A)$ into the MI spaces on $L^2 (X; \mc H)$ with the help of Zak transform, where $X=\widehat{\Gamma}$ and $\mc H=L^2(\Gamma\backslash \mathscr G)$. 
 
 In this setup  the   range function is   $ J:\widehat{\Gamma}\ra\{ {\mbox{closed subspaces of } L^2(\Gamma\backslash \mathscr G)}\}$.
For the $\Gamma$-TI space $\mc S^{\Gamma}(\mathscr A)$  in $L^2 (\mathscr G)$,   the corresponding range function $J$ is such that, for a.e.  $\alpha \in \widehat{\Gamma}$, $J(\alpha)$ is defined by
\begin{equation}\label{J(alpha)forZak}
	J(\alpha)= \overline{\Span}\{(\mc Zf)(\alpha) : f \in \mathscr A_0\} =:  J_{\mathscr A} (\alpha).
\end{equation}
One of the benefits of Zak transform for the pair $(\G, \Gamma)$ is that the various inaccessible examples pairs like        $(\mathbb R^n,  \mathbb R^m)$, $(\mathbb R^n, \mathbb Z^m)$,  $(\mathbb Q_p, \mathbb Z_p)$, $(\mathcal G, \Gamma)$, etc., can be accessed through it where  $n\geq m$,  $\Gamma$ (not necessarily co-compact, i.e., $\mathcal G/\Gamma$-compact, or uniform lattice) is a  closed subgroup of the second countable locally compact abelian (LCA) group $\mathcal G$, and   $\mathbb Z_p$ is the  $p$-adic integer in the $p$-adic number $\mathbb Q_p$.

Since $\mc Z: L^2(\mathscr G)\ra L^2(\widehat \Gamma; \Gamma\backslash \mathscr G)$ is an unitary operator, we have
\begin{equation}\label{E: AngleMITI}
	\begin{aligned}
		\mathfrak S(\mc S^{\Gamma}(\msc A), \mc S^{\Gamma}(\msc A'))&=\sup\left\{ \frac{|\langle  u,  v\rangle|}{\|u\| \|v\|}:u \in \mc S^{\Gamma}(\msc A)\backslash \{0\}, v\in \mc S^{\Gamma}(\msc A')\backslash \{0\} \right\}\\
		&=\sup\left\{ \frac{|\langle \mc Z u, \mc Zv\rangle|}{\|\mc Zu\| \|\mc Zv\|}:\mc Zu \in \mc Z\mc S^{\Gamma}(\msc A)\backslash \{0\}, \mc Zv\in \mc Z\mc S^{\Gamma}(\msc A')\backslash \{0\} \right\}\\
		&=\mathfrak S(\mc Z\mc S^{\Gamma}(\msc A) ,\mc Z\mc S^{\Gamma}(\msc A'))
		.\end{aligned} 
\end{equation}
The relation (\ref{E: AngleMITI}) of the angle between two MI spaces and the corresponding TI spaces will allow us to have results similar to Theorem (\ref{T:sup-pointwise}) and Theorem (\ref{T:closedness}) for the set up of TI spaces. 
The next theorem is an application of Theorem (\ref{T:sup-pointwise}) which provides a  necessary and sufficient condition for the sum of two translation invariant spaces of a locally compact group to be closed. A similar result has been proved for shift invariant spaces \cite[Theorem 3.6]{kim2006supremum}.
\begin{thm}\label{T:LCClosed} 
	Let  $\mathscr A=\{\varphi_i: i\in I\}$ and $\mathscr A'=\{\psi_i: i\in I\}$ be two countable collections of functions in $L^2(\mathscr G).$ Define $\sigma(\mc S^{\Gamma}(\msc A))=\{\alpha \in \widehat \Gamma: J_{\msc A}(\alpha)\neq \{0\}\}$
	and $\Omega:=\sigma(\mc S^\Gamma(\mathscr A)) \cap \sigma(\mc S^\Gamma(\mathscr A')).$ Then the supremum cosine angle between $\mc S^\Gamma(\mathscr A)$ and $\mc S^\Gamma(\mathscr A')$ is given by 
	$$\mathfrak S(\mc  S^\Gamma(\mathscr A), \mc S^\Gamma(\mathscr A'))=\ess-sup_{\alpha \in \Omega}\{\mathfrak S(J_{\mathscr A}(\alpha), J_{\mathscr A'}(\alpha))\}.$$
	In addition, if the translation generated systems  $\mc E^\Gamma(\mathscr A)$ and $\mc E^{\Gamma}(\mathscr A')$ are  frames for $\mc S^\Gamma(\mathscr A)$ and $\mc S^\Gamma(\mathscr A')$, respectively, then
	$$\mathfrak S(\mc  S^\Gamma(\mathscr A), \mc S^\Gamma(\mathscr A'))={\ess-sup}_{\alpha\in \Omega}\left\|(G_{\mathscr A'}(\alpha)^\dagger )^\frac{1}{2} G_{\mathscr A, \mathscr A'}(\alpha) (G_{\mathscr A}(\alpha)^\dagger)^\frac{1}{2}\right\|,$$ where $\dagger$ denotes the pseudoinverse.
\end{thm}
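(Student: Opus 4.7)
The plan is to reduce everything to the MI setting via the Zak transform $\mc Z : L^2(\mathscr G) \to L^2(\widehat{\Gamma}; L^2(\Gamma\backslash \mathscr G))$ and then invoke Theorem \ref{T:sup-pointwise} on the fiber space $X = \widehat{\Gamma}$ with $\mc H = L^2(\Gamma\backslash \mathscr G)$. As noted in the excerpt, the intertwining identity $\mc Z L_\gamma = M_{\phi_\gamma} \mc Z$ with $\phi_\gamma(\alpha) = \overline{\alpha(\gamma)}$ shows that $\mc Z$ maps the $\Gamma$-TG system $\mc E^\Gamma(\mathscr A)$ onto the MG system $\mc E_{\mc D}(\mc Z\mathscr A)$, where $\mc D = \{\phi_\gamma : \gamma \in \Gamma\}$ is a Parseval determining set for $L^1(\widehat{\Gamma})$ (this is essentially Plancherel for $\Gamma$). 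Consequently, $\mc Z \mc S^\Gamma(\mathscr A) = \mc S_{\mc D}(\mc Z \mathscr A)$ and similarly for $\mathscr A'$, so the images are MI spaces in the sense of Definition \ref{MIOpDefn}.

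Next I would identify the range functions. By (\ref{J(alpha)forZak}), the range function associated with the TI space $\mc S^\Gamma(\mathscr A)$ is precisely $J_{\mathscr A}(\alpha) = \overline{\Span}\{(\mc Zf)(\alpha) : f \in \mathscr A\}$, which coincides with the range function attached to the MI space $\mc S_{\mc D}(\mc Z\mathscr A)$. Hence the spectra match, $\sigma(\mc S^\Gamma(\mathscr A)) = \sigma(\mc S_{\mc D}(\mc Z\mathscr A))$, and the set $\Omega$ in the statement of Theorem \ref{T:LCClosed} agrees with the $\Omega$ appearing in Theorem \ref{T:sup-pointwise} applied to $\mc Z\mathscr A$ and $\mc Z\mathscr A'$. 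Combining this identification with the unitary invariance of the supremum cosine angle recorded in (\ref{E: AngleMITI}),
\begin{equation*}
\mathfrak S(\mc S^\Gamma(\mathscr A), \mc S^\Gamma(\mathscr A')) = \mathfrak S(\mc Z\mc S^\Gamma(\mathscr A), \mc Z\mc S^\Gamma(\mathscr A')) = \mathfrak S(\mc S_{\mc D}(\mc Z\mathscr A), \mc S_{\mc D}(\mc Z\mathscr A')),
\end{equation*}
the first formula of the theorem follows directly from the first conclusion of Theorem \ref{T:sup-pointwise}.

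For the second formula, I would check that if $\mc E^\Gamma(\mathscr A)$ is a frame for $\mc S^\Gamma(\mathscr A)$, then its Zak image $\mc E_{\mc D}(\mc Z\mathscr A)$ is a frame for $\mc S_{\mc D}(\mc Z\mathscr A)$, which is immediate from the unitarity of $\mc Z$ and preservation of frame bounds. The Gramian and mixed Gramian in the TI setting are defined fiberwise from the vectors $(\mc Z\varphi_i)(\alpha), (\mc Z\psi_i)(\alpha) \in L^2(\Gamma\backslash \mathscr G)$, so the matrices $G_{\mathscr A}(\alpha), G_{\mathscr A'}(\alpha), G_{\mathscr A, \mathscr A'}(\alpha)$ appearing in the statement are exactly the Gramian/mixed-Gramian of the MI generators $\mc Z\mathscr A, \mc Z\mathscr A'$. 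Plugging these into the second conclusion of Theorem \ref{T:sup-pointwise} yields the desired essential-supremum formula in terms of pseudoinverses.

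The only real bookkeeping obstacle is verifying that $\mc D = \{\phi_\gamma\}_{\gamma \in \Gamma}$ is indeed a Parseval determining set for $L^1(\widehat{\Gamma})$ so that the results of Section \ref{S:Results} apply verbatim; this, however, is the content of the framework of \cite{iverson2015subspaces} and is the very reason the Zak transform appears in this setup. Once that is in place, the proof is a straightforward transfer through the unitary $\mc Z$, with no new analytic content required beyond Theorem \ref{T:sup-pointwise}.
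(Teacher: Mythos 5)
Your proposal is correct and follows essentially the same route as the paper: the paper itself gives no separate proof of Theorem \ref{T:LCClosed}, deriving it from Theorem \ref{T:sup-pointwise} by transporting everything through the unitary Zak transform, using the intertwining of translations with multiplications, the identification of range functions in (\ref{J(alpha)forZak}), and the invariance of the supremum cosine angle recorded in (\ref{E: AngleMITI}). Your write-up merely makes explicit the bookkeeping (Parseval determining set, matching of spectra, Gramians, and frame bounds under $\mc Z$) that the paper leaves implicit.
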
	
Similarly, the next result is an application of Theorem (\ref{T:closedness}) to TI spaces.		
\begin{thm}
	In addition to the standing assumptions as in Theorem (\ref{T:LCClosed}), let $$\Omega':=\left\{\alpha\in \Omega: J_{\mathscr A}(\alpha)\cap J_{\mathscr A'}(\alpha)= \{0\}\right\}
	=\Omega\backslash  \sigma\left( \mc S^\Gamma(\mathscr A)\cap \mc S^\Gamma(\mathscr A')\right).$$
	Then the following are equivalent:	
\end{thm}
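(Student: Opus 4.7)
The plan is to transfer Theorem \ref{T:closedness} across the Zak transform, in exactly the same spirit in which Theorem \ref{T:LCClosed} was obtained from Theorem \ref{T:sup-pointwise}. The crucial input is that $\mc Z : L^2(\mathscr G) \to L^2(\widehat\Gamma; L^2(\Gamma\backslash\mathscr G))$ is a unitary operator converting the left translations $L_\xi$ into multiplication operators $M_{\phi_\xi}$, with $\phi_\xi(\alpha)=\overline{\alpha(\xi)}$. Consequently each $\Gamma$-TI space $\mc S^\Gamma(\msc A)$ corresponds to the MI space $\mc Z\mc S^\Gamma(\msc A)$ in $L^2(\widehat\Gamma; L^2(\Gamma\backslash\mathscr G))$, and the range function of the latter is precisely $J_{\msc A}(\alpha)=\overline{\Span}\{(\mc Zf)(\alpha): f\in\msc A\}$, as recorded in (\ref{J(alpha)forZak}).

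First I would check that $\mc D=\{\phi_\xi : \xi\in\Gamma\}$ is a Parseval determining set for $L^1(\widehat\Gamma)$ (this is Plancherel's theorem on the dual group), so that the MI framework from Section \ref{S:Preliminaries} applies to the pair $\mc Z\mc S^\Gamma(\msc A)$ and $\mc Z\mc S^\Gamma(\msc A')$. Next I would verify that restriction to a measurable subset of $\widehat\Gamma$ commutes with $\mc Z$, i.e., $\mc Z\bigl(\mc S^\Gamma(\msc A)|_{\Omega'}\bigr) = \bigl(\mc Z\mc S^\Gamma(\msc A)\bigr)\big|_{\Omega'}$, which is immediate from the identification of $\Omega'\subset\widehat\Gamma$ with the spectrum side after Zak transform and from Proposition \ref{Restriction-space}.

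Because $\mc Z$ is unitary, closedness of the sum and the value of the supremum cosine angle are preserved (the latter by (\ref{E: AngleMITI})), and intersections map to intersections, so $\sigma(\mc S^\Gamma(\msc A)\cap \mc S^\Gamma(\msc A'))$ coincides with $\sigma\bigl(\mc Z\mc S^\Gamma(\msc A)\cap \mc Z\mc S^\Gamma(\msc A')\bigr)$; in particular the $\Omega'$ defined here matches the $\Omega'$ of Theorem \ref{T:closedness} applied to the Zak-transformed spaces. Moreover, the fiber spaces are literally the same objects $J_{\msc A}(\alpha)$ and $J_{\msc A'}(\alpha)$ on both sides. The four conditions of the theorem therefore translate one-for-one into the four conditions of Theorem \ref{T:closedness} for the MI pair $\bigl(\mc Z\mc S^\Gamma(\msc A),\ \mc Z\mc S^\Gamma(\msc A')\bigr)$ in $L^2(\widehat\Gamma; L^2(\Gamma\backslash\mathscr G))$, and the equivalences follow by directly invoking Theorem \ref{T:closedness}.

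The main obstacle is bookkeeping rather than analysis: one has to be careful that the pointwise fibers and their spectra are genuinely preserved when passing through $\mc Z$, especially since $\mathscr G$ is only required to be locally compact (not abelian) and $\Gamma$ need not be co-compact, so classical fiberization is unavailable and one really must rely on the Zak-transform identification built in Section \ref{LC Group}. Once the dictionary between the TI and MI sides is set up (Parseval determining set, range function, restriction, spectra, supremum cosine angle), the proof reduces to citing Theorem \ref{T:closedness} verbatim.
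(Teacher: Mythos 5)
Your proposal is correct and follows exactly the route the paper intends: the paper gives no separate proof of this theorem, treating it as an immediate consequence of Theorem \ref{T:closedness} via the unitarity of the Zak transform, the conversion of translations into multiplications, and the identification of range functions and supremum cosine angles in (\ref{J(alpha)forZak}) and (\ref{E: AngleMITI}). Your additional care about the Parseval determining set, restriction to $\Omega'$, and preservation of spectra only makes explicit the bookkeeping the paper leaves implicit.
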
	
\begin{enumerate}
	\item[(i)]   $\mc S^\Gamma(\mathscr A)|_{\Omega'}+\mc S^\Gamma(\mathscr A')|_{\Omega'}$ is closed. 
	\item[(ii)] $\mathfrak S(\mc S^\Gamma(\mathscr A)|_{\Omega'},\mc S^\Gamma(\mathscr A')|_{\Omega'})<1$.
	\item[(iii)] $J_{\mathscr A}(\alpha)+J_{\mathscr A'}(\alpha)$ is closed for 
	a.e. $\alpha \in  \Omega'$.
	\item[(iv)]  $\mathfrak S(J_{\mathscr A}(\alpha), J_{\mathscr A'}(\alpha))<1$ for 
	a.e. $\alpha \in  \Omega'$.
\end{enumerate} 
In the case of LCA group $\mc G$ and its closed abelian subgroup $\Gamma,$ we can state the above theorems using the fiberization map (\ref{fib}). For this case, the equation (\ref{E: AngleMITI}) will be modified as follows
$$\mathfrak S(\mc S^{\Gamma}(\msc A), \mc S^{\Gamma}(\msc A')) = \mathfrak S(\mathscr {F}\mc S^{\Gamma}(\msc A) ,\mathscr F \mc S^{\Gamma}(\msc A')).$$
Similar to MI spaces, here also we can talk about the injectivity of the sampling operator associated to TI space $\mc S^{\Gamma}(\msc A).$
Let us first give some definitions.
\begin{defn}
	Let $\mathscr A = \{\psi_i\}_{i \in I} \subset L^2(\mathscr G)$ be a countable collection  such that the $\Gamma$-TG system $\mc E^{\Gamma}(\mathscr A)$ is Bessel. Then the \textit{sampling operator} assocated with the system $\mc E^{\Gamma}(\mathscr A)$ is defined by
	\begin{equation}
		T: L^2(\mathscr{G})  \ra L^2( \Gamma \times I),\ Tf:= \{\langle f,L_\xi \psi_i\rangle\}_{\xi \in \Gamma, i \in I}.	
	\end{equation}	
\end{defn}
Corresponding to a arbitrary index set $\Delta$, let $\{\mc S_{\delta,}\}_{\delta\in \Delta}$ be a collection of  finitely generated TI spaces  in $L^2(\mathscr G)$. Let $$\mc X: =\bigcup _{\delta\in \Delta}\mc S_{\delta,\theta},$$ 
where $\mc S_{\delta, \theta}:=\mc S_{\delta}+\mc S_{\theta}=\{u+v: u\in \mc S_{\delta}, v \in \mc S_{\theta}\}.$
The next result is an application of Theorem (\ref{SamplingMain}) for translation invariant spaces.
\begin{thm}
	Let $\msc  A=\{\psi_i\}_{i\in I}$ be a countable collection in  $L^2(\msc G)$ such that $\mc E^\Gamma(\msc A)$ is  Bessel and the supremum cosine angle $$\mathfrak S(\mc S_{\delta},\mc S_{\theta})<1 \mbox{ for every} \  \delta,\theta \in \Delta.$$ If $\msc A_{\delta,\theta}'$ is  a finite set of generators for $\mc S_{\delta,\theta},$  then the sampling operator associated with $\mc E^\Gamma(\msc A)$ is one-to-one for  $\bigcup _{\delta\in \Delta}\mc S_{\delta,\theta}$ if and only if
	$\dim (\range(G_{\msc A_{\delta, \theta}', \msc A})(\alpha))=\dim J_{\mc A'_{\delta,\theta}}(\alpha)$ for a.e. $\alpha \in \widehat \Gamma$,  and $\delta,\theta \in \Delta$.
	
\end{thm}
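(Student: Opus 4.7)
The plan is to transport the entire statement to the multiplication invariant setting of Theorem \ref{SamplingMain} via the Zak transform, and then read the conclusion off from there. Since $\mc Z: L^2(\mathscr G)\to L^2(\widehat{\Gamma}; L^2(\Gamma\backslash \mathscr G))$ is unitary and intertwines each left translation $L_\xi$, $\xi\in\Gamma$, with the multiplication operator $M_{\phi_\xi}$ on $L^2(\widehat{\Gamma}; L^2(\Gamma\backslash\mathscr G))$, the family $\mc D:=\{\phi_\xi:\xi\in\Gamma\}$ is a Parseval determining set on $\widehat{\Gamma}$, and the TI spaces $\mc S_\delta$, $\mc S_\theta$, $\mc S_{\delta,\theta}$ are carried to the MI spaces $\mc Z\mc S_\delta$, $\mc Z\mc S_\theta$, $\mc Z\mc S_{\delta,\theta}$. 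The Bessel hypothesis on $\mc E^{\Gamma}(\msc A)$ is preserved because $\mc Z$ is an isometry, so $\mc E_{\mc D}(\mc Z\msc A)$ is Bessel in $L^2(\widehat{\Gamma}; L^2(\Gamma\backslash\mathscr G))$.

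First I would lift the angle hypothesis. By the chain of equalities in (\ref{E: AngleMITI}) one has $\mathfrak S(\mc S_\delta,\mc S_\theta)=\mathfrak S(\mc Z\mc S_\delta,\mc Z\mc S_\theta)$, so the assumption $\mathfrak S(\mc S_\delta,\mc S_\theta)<1$ translates to $\mathfrak S(\mc Z\mc S_\delta,\mc Z\mc S_\theta)<1$ for every $\delta,\theta\in\Delta$. Theorem \ref{T:closedness} (applied with the trivial restriction, since this angle is already strictly less than $1$ globally) then forces $\mc Z\mc S_\delta+\mc Z\mc S_\theta$ to be closed, and by unitarity $\mc S_{\delta,\theta}=\mc S_\delta+\mc S_\theta$ is itself closed and thus a genuine $\Gamma$-TI space with the finite generating set $\msc A_{\delta,\theta}'$. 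Consequently $\mc Z\mc S_{\delta,\theta}=\mc S_{\mc D}(\mc Z\msc A_{\delta,\theta}')$ is a finitely generated MI space.

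Next, I would transfer the sampling operator. For $f\in L^2(\mathscr G)$, unitarity of $\mc Z$ gives
\[
\langle f, L_\xi \psi_i\rangle=\langle \mc Z f, \mc Z L_\xi \psi_i\rangle=\langle \mc Z f, M_{\phi_\xi}\mc Z\psi_i\rangle,
\]
so $T$ is injective on $\bigcup_{\delta,\theta\in\Delta}\mc S_{\delta,\theta}$ if and only if the MI sampling operator associated to $\{\mc Z\psi_i\}_{i\in I}$ and $\mc D$ is injective on $\bigcup_{\delta,\theta\in\Delta}\mc Z\mc S_{\delta,\theta}$. Similarly, since fibers and mixed Gramians in the TI setting are defined through $\mc Z$ in the first place (see (\ref{J(alpha)forZak})), $J_{\msc A'_{\delta,\theta}}(\alpha)$ and $G_{\msc A'_{\delta,\theta},\msc A}(\alpha)$ agree with the fiber range function and mixed Gramian of the MI collections $\mc Z\msc A'_{\delta,\theta}$ and $\mc Z\msc A$ at the point $\alpha\in\widehat\Gamma$, so the dimension identity in the theorem is literally the MI condition in Theorem \ref{SamplingMain}.

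Applying Theorem \ref{SamplingMain} to $\{\mc Z\psi_i\}_{i\in I}$ and the generating sets $\mc Z\msc A'_{\delta,\theta}$ then yields the desired equivalence. The only delicate step I expect is the bookkeeping for the translation-to-multiplication dictionary: verifying that $\mc D=\{\phi_\xi:\xi\in\Gamma\}$ is actually a Parseval determining set for $L^1(\widehat{\Gamma})$ (which is Plancherel on $\Gamma$ applied fiberwise, cf.\ \cite{iverson2015subspaces}) and that the Gramian and range-function constructions commute with $\mc Z$ in the way indicated; everything else is a direct invocation of results already established in Sections \ref{S:Results} and \ref{S:Sampling}.
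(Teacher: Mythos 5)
Your proposal is correct and matches the paper's intended argument: the paper states this theorem without proof, as a direct application of Theorem \ref{SamplingMain}, relying on exactly the Zak-transform transfer you spell out (unitarity, the intertwining $\mc Z L_\xi = M_{\phi_\xi}\mc Z$, the Parseval determining property of $\{\phi_\xi\}_{\xi\in\Gamma}$ via Plancherel on $\Gamma$, and the angle identity (\ref{E: AngleMITI})). Your write-up is in fact more explicit than the paper's about how the Bessel condition, the sampling operator, the range functions, and the mixed Gramians are carried over fiberwise by $\mc Z$.
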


\begin{example}
	For a prime number $p$, let $\mathbb Q_p$ be the group of $p$-adic numbers and $\mathbb Z_p$ be its closed subgroup. It is a locally compact abelian group. All its subgroups are compact and open. Let the fundamental domain for $\mathbb Z_p$  is $\Delta$ which is discrete. The Zak transform for $( \mathbb Q_p, \mathbb Z_p)$ is $\tilde{\mc Z} f(x, y) =\int_{\mathbb Z_p}f(y+\xi)e^{-2 \pi i x \xi}\ d{\mu_{\mathbb Z_p}}(\xi)$, for $x, y \in \Delta$.  We can find the supremum cosine angle between two $\mathbb Z_p$-invariant subspaces by the Theorem \ref{T:sup-pointwise}.
\end{example}
\begin{example}
	Let $\mc G=\mathbb R^n$ and $\Lambda=\mathbb Z^n$. Then, $\widehat{\mc G}=\mathbb R^n$, $\Lambda^\perp=\mathbb Z^n$ and the fundamental domain for  $\mathbb Z^n$ is $\widehat{\mc G}\backslash \Lambda^\perp=\mathbb T^n$. Then, the fiberization map  $ \mathscr F:L^2(\mathbb R^n)\ra L^2(\mathbb T^n;\ell^2(\mathbb Z^n))$  is defined by
	$\mathscr Ff(\xi)=\{\widehat {f}(\xi+k)\}_{k\in \mathbb Z^n}, \xi \in \mathbb  T^n$. Therefore by considering  countable families $\msc A$ and $\mathscr A'$ in $L^2 (\mathbb R^n)$,  we can find  supremum cosine angle between  $\mc S^\Lambda(\msc A)$ and $\mc S^\Lambda(\msc A')$ and discuss their closedness Theorem (\ref{T:LCClosed}).
\end{example}
\providecommand{\bysame}{\leavevmode\hbox to3em{\hrulefill}\thinspace}
\providecommand{\MR}{\relax\ifhmode\unskip\space\fi MR }
% \MRhref is called by the amsart/book/proc definition of \MR.
\providecommand{\MRhref}[2]{%
	\href{http://www.ams.org/mathscinet-getitem?mr=#1}{#2}
}
\providecommand{\href}[2]{#2}

\end{document}